\documentclass[12pt]{article}

\usepackage{amsmath,amsthm,amsfonts,latexsym,amscd,amssymb}

\def\qed{{\hbadness=10000\hfill\ \vbox{\hrule height.09ex
     \hbox{\vrule width.09ex height1.55ex depth.2ex \kern1.8ex
     \vrule width.09ex height1.55ex depth.2ex}\hrule height.09ex}\break
     \bigskip}}

\newtheorem{thm}{Theorem}[section]
\newtheorem{prop}[thm]{Proposition}
\newtheorem{cor}[thm]{Corollary}
\newtheorem{lemma}[thm]{Lemma}

\newtheorem{rem}[thm]{Remark}
\numberwithin{equation}{section}
\newtheorem{quest}[thm]{Question}
\newtheorem{example}[thm]{Example}

\begin{document}
\title{Isotopic classes of Transversals}
\author{Vipul Kakkar\footnote{The first author is supported by CSIR, Government of India.}
~ and R.P. Shukla\\
Department of Mathematics, University of Allahabad \\
Allahabad (India) 211 002\\
Email: vplkakkar@gmail.com; shuklarp@gmail.com}

\date{}
\maketitle


\begin{abstract}
Let $G$ be a finite group and $H$ be a subgroup of $G$. In this paper, we prove that if $G$ is a finite nilpotent group and $H$ a subgroup of $G$, then $H$ is normal in $G$ if and only if all normalized right transversals of $H$ in $G$ are isotopic, where the isotopism classes are formed with respect to induced right loop structures. We have also determined the number isotopy classes of transversals of a subgroup of order $2$ in $D_{2p}$, the dihedral group of order $2p$, where $p$ is an odd prime.
\end{abstract}

\noindent \textbf{classification:} 20D60; 20N05 \\
\noindent \textbf{keywords:} Right loop, Normalized Right Transversal, Isotopy

\section{Introduction}
Let $G$ be a group and $H$ be a subgroup of $G$. A \textit{normalized right transversal (NRT)} $S$ of $H$ in $G$ is a subset of $G$ obtained by choosing one and only one element from each right coset of $H$ in $G$ and $1 \in S$. Then $S$ has a induced binary operation $\circ$ given by $\{x \circ y\}=Hxy \cap S$, with respect to which $S$ is a right loop with identity $1$, that is, a right quasigroup with both sided identity  (see \cite[Proposition 4.3.3, p.102]{smth},\cite{rltr}). Conversely, every right loop can be embedded as an NRT in a group with some universal property (see \cite[Theorem 3.4, p.76]{rltr}). Let $\langle S \rangle$ be the subgroup of $G$ generated by $S$ and $H_S$ be the subgroup $\langle S \rangle \cap H$. Then  $H_S=\langle \left\{xy(x \circ y)^{-1}|x, y \in S \right\} \rangle$ and $H_{S}S=\langle S \rangle$ (see \cite{rltr}).
Identifying $S$ with the set $H \backslash G$ of all right cosets of $H$ in $G$, we get a transitive permutation representation $\chi_{S}:G\rightarrow Sym(S)$ defined by $\left\{\chi_{S}(g)(x)\right\}=Hxg \cap S, g\in G, x\in S$. The kernal $Ker \chi_S$ of this action is $Core_{G}(H)$, the core of $H$ in $G$.   

Let $G_{S}=\chi_{S}(H_{S})$. This group is known as the \textit{group torsion} of the right loop $S$ (see \cite[Definition 3.1, p.75]{rltr}). The group $G_S$ depends only on the right loop structure $\circ$ on $S$ and not on the subgroup $H$. Since $\chi_S$ is injective on $S$ and if we identify $S$ with $\chi_S(S)$, then $\chi_S(\langle S \rangle)=G_SS$ which also depends only on the right loop $S$ and $S$ is an NRT of $G_S$ in $G_SS$. One can also verify that $ Ker(\chi_S|_{H_SS}: H_SS \rightarrow G_SS)=Ker(\chi_S|_{H_S}: H_S \rightarrow G_S)=Core_{H_SS}(H_S)$ and $\chi_S|_S$=the identity map on $S$. Also $(S, \circ)$ is a group if and only if $G_S$ trivial.

Two groupoids $(S,\circ)$ and $(S^{\prime},\circ^{\prime})$ are said to be \textit{isotopic} if there exists a triple $(\alpha, \beta, \gamma)$ of bijective maps from $S$ to $S^{\prime}$ such that $\alpha(x) \circ^{\prime} \beta(x)=\gamma(x \circ y)$. Such a triple $(\alpha, \beta, \gamma)$ is known as an isotopism or isotopy between $(S,\circ)$ and $(S^{\prime},\circ^{\prime})$. We note that if $(\alpha, \beta, \gamma)$ is an isotopy between $(S, \circ)$ and $(S^{\prime},\circ^{\prime})$ and if $\alpha= \beta= \gamma $, then it is an isomorphism. An \textit{autotopy}(resp. \textit{automorphism}) on $S$ is an isotopy (resp. isomorphism) form $S$ to itself. Let $\mathcal{U}(S)$(resp. $Aut(S)$) denote the group of all aututopies (resp. automorphisms) on $S$. Two groupoids $(S,\circ)$ and $(S,\circ^{\prime})$, defined on same set $S$, are said to be \textit{principal isotopes} if $(\alpha, \beta, I)$ is an isotopy between $(S,\circ)$ and $(S,\circ^{\prime})$, where $I$ is the identity map on $S$ (see \cite[p. 248]{rhb}). Let $\mathcal{T}(G,H)$ denote the set of all normalized right transverslas (NRTs) of $H$ in $G$. In next section, we will investigate the isotopism property in $\mathcal{T}(G,H)$. We say that $S , T \in \mathcal{T}(G,H)$ are isotopic, if their induced right loop structures are isotopic. Let $\mathcal{I}{tp}(G,H)$ denote the set of isotopism classes of NRTs of $H$ in $G$. If $H \trianglelefteq G$, then each NRT $S \in \mathcal{T}(G,H)$ is isomorphic to the quoteint group $G/H$. Thus $|\mathcal{I}{tp}(G,H)|=1$. We feel that the converse of the above statement should also be true. In next section, we will prove that if $G$ is a finite nilpotent group and $|\mathcal{I}{tp}(G,H)|=1$, then $H \trianglelefteq G$.

In sections $2$ and $3$, we discuss isotopy classes of transversals in some particular groups. The main results of section $3$ are Proposition \ref{3p7} and Theorem \ref{3p12}. The main results of Section $4$ are Theorem \ref{4t1} and Theorem \ref{4t2}, which describe the isotopy classes of transversals of a subgroup of order $2$ in $D_{2p}$, the dihedral group of order $2p$, where $p$ is an odd prime.

\section{Isotopy in $\mathcal{T}(G,H)$}
Let $(S,\circ)$ be a right loop. For $x \in S$, we denote the map $y\mapsto y \circ x$ $(y \in S)$ by $R_x^{\circ}$. Let $a \in S$ such that the equation $a \circ X=c$ has unique solution for all $c \in S$, in notation we write it as $X=a \backslash_{\circ} c$. Then the map $L_a^{\circ}:S\rightarrow S$ defined by $L_a^{\circ}(x)=a \circ x$ is bijective map. Such an element $a$ is called a \textit{left non-singular} element of $S$. We will drop the superscript, if the binary operation is clear. It is observed in \cite[Theorem 1A, p.249]{rhb} that $(S,\circ^{\prime})$ is a principal isotope of $(S,\circ)$, where $x \circ^{\prime} y=(R_b^{\circ})^{-1}(x) \circ (L_a^{\circ})^{-1}(y)$ under the isotopy $((R_b^{\circ})^{-1},(L_a^{\circ})^{-1},I)$ from $(S,\circ^{\prime})$ to $(S,\circ)$ and every principal isotope is of this form. Let us denote this isotope by $S_{a,b}$. It is also observed in \cite[Lemma 1A, p.248]{rhb} that if right loop $(S_1, \circ_1)$ is isotopic to the right loop $(S_2,\circ_2)$, then $(S_2,\circ_2)$ is isomorphic to $(S_1,\circ^{\prime})$, the principal isotope of $(S_1,\circ_1)$ defined above. Write the equation $x \circ^{\prime} y=(R_b^{\circ})^{-1}(x) \circ (L_a^{\circ})^{-1}(y)$ by $R_y^{\circ^{\prime}}(x)=R_{(L_a^{\circ})^{-1}(y)}^{\circ}((R_b^{\circ})^{-1}(x))$. This means that if $S_1$ and $S_2$ are isotopic right loops, then $G_{S_1}S_1 \cong G_{S_2}S_2$. 

\begin{prop}\label{3p2} Let $(S,\circ)$ and $(S^{\prime},\circ^{\prime})$ be isotopic right loops. Then the set of left non-singular elements of $S$ is in bijective correspondence to that of $S^{\prime}$.
\end{prop}
\begin{proof} Let $(\alpha, \beta, \gamma)$ be an isotopy from $(S,\circ)$ to $(S^{\prime},\circ^{\prime})$. Let $a \in S$ such that $\alpha(a)$ is a left non-singular element of $S^{\prime}$. We will show that $a$ is left non-singular in $S$. Consider the equation $a \circ X=b$, where $b \in S$. Let $\gamma(b)=c \in S^{\prime}$. Choose $y \in S$ such that $\beta(y)=\alpha(a)\backslash_{\circ^{\prime}} c$. Then $\alpha(a)\backslash_{\circ^{\prime}} c$ is the unique solution of the equation $\alpha(a) \circ^{\prime} Y=c$. Now, it is easy to check that $\beta^{-1}(\alpha(a)\backslash_{\circ^{\prime}} c)$ is the unique solution of $a \circ X=b$.  
\end{proof}
\begin{cor}\label{3p2c} A right loop isotopic to a loop itself is a loop. 
\end{cor}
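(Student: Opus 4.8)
The plan is to reduce the statement to Proposition \ref{3p2} via the standard characterization of loops among right loops. Recall that a right loop $(S,\circ)$ is, by definition, a right quasigroup with two-sided identity, so it is a loop precisely when it is moreover a quasigroup, i.e.\ when the left translation $L_a^{\circ}\colon x\mapsto a\circ x$ is a bijection for every $a\in S$. In the terminology introduced above, this says exactly that $(S,\circ)$ is a loop if and only if every element of $S$ is left non-singular. (This equivalence is routine: if every $L_a^{\circ}$ is bijective, then $(S,\circ)$ is a quasigroup with identity, hence a loop; conversely a loop has all left translations bijective by definition.)

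With this in hand, suppose $(S,\circ)$ is a right loop isotopic to a loop $(S^{\prime},\circ^{\prime})$, and let $(\alpha,\beta,\gamma)$ be an isotopy from $(S,\circ)$ to $(S^{\prime},\circ^{\prime})$. Since $(S^{\prime},\circ^{\prime})$ is a loop, every element of $S^{\prime}$ is left non-singular; in particular, for each $a\in S$ the element $\alpha(a)\in S^{\prime}$ is left non-singular in $S^{\prime}$. The argument given in the proof of Proposition \ref{3p2} then applies verbatim to show that $a$ is left non-singular in $S$. As $a\in S$ was arbitrary, every element of $S$ is left non-singular, so by the characterization above $(S,\circ)$ is a loop.

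I do not expect a genuine obstacle here; the content is entirely carried by Proposition \ref{3p2}. The only points requiring a little care are (i) invoking the correct direction of that proposition — namely that left non-singularity of $\alpha(a)$ in $S^{\prime}$ forces left non-singularity of $a$ in $S$, which is precisely what its proof establishes — and (ii) recording the (trivial) equivalence between being a loop and having all elements left non-singular. In particular, no finiteness hypothesis on $S$ is needed, since we use the pointwise statement from the proof of Proposition \ref{3p2} rather than merely the bijective correspondence in its conclusion.
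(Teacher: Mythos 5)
Your proposal is correct and matches the paper's intent: the corollary is stated there without proof precisely because it is an immediate consequence of Proposition \ref{3p2}, derived exactly as you do. Your additional remarks — that one should use the pointwise implication from the proof of Proposition \ref{3p2} (so that no finiteness of $S$ is needed) and that a right loop is a loop iff every element is left non-singular — are sound and only make the argument more careful than the paper's implicit one.
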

Let $A=\{a_i|1\leq i \leq n\}$ and $B=\{b_i|1\leq i \leq n\}$ be sets. We denote the bijective map $\alpha:A\rightarrow B$ defined by $\alpha(a_i)=b_i$ as 
$\alpha={a_1, a_2, \cdots, a_n  \choose b_1, b_2, \cdots, b_n }$.

\begin{example}\label{3e1} Let $G=Sym(3)$ and $H=\{I,(2,3)\}$, where $I$ is the identity permutation. In this example, we show that $|\mathcal{I}{tp}(G,H)|=2$. In this case, $S_1=\{I,(1,2,3),(1,3,2)\}$, $S_2=\{I,(1,3),(1,3,2)\}$, $S_3=\{I,(1,3),(1,2)\}$ and $S_4=\{I,(1,2,3),(1,2)\}$ are all NRTs of $H$ in $G$. Since $S_1$ is loop transversal, by Corollary \ref{3p2c} it is not isotopic to $S_i$ $(2 \leq i \leq 4)$. The restriction of $i_{(2,3)}$, the inner conjugation of $G$ by $(2,3)$, on $S_2$ is right loop isomorphism from $S_2$ to $S_4$. One can easily see that $\alpha={I, (1,3), (1,3,2)  \choose I, (1,2), (1,3) \ \ }$, $\beta=\gamma={I, (1,3), (1,3,2)  \choose (1,2), I, (1,3) \ \ }$ is an isotopy from $S_2$ to $S_3$. This means that $|\mathcal{I}{tp}(G,H)|=2$.
\end{example}
\begin{prop}\label{3p3} Let $G$ be a finite group and $H$ be a subgroup of $G$. Let $N=Core_G(H)$. Then $|\mathcal{I}{tp}(G,H)|=|\mathcal{I}{tp}(G/N,H/N)|$.
\end{prop}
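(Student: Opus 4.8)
The plan is to build an explicit bijection between $\mathcal{I}{tp}(G,H)$ and $\mathcal{I}{tp}(G/N,H/N)$ induced by the canonical projection $\pi : G \to G/N$. First I would record that, since $N = Core_G(H)$ is normal in $G$ and contained in $H$, the map $\pi$ carries the right cosets of $H$ in $G$ bijectively onto the right cosets of $H/N$ in $G/N$. Consequently, for any $S \in \mathcal{T}(G,H)$ the image $\bar{S} := \pi(S) = \{sN : s \in S\}$ meets every right coset of $H/N$ in $G/N$ exactly once and contains the identity, so $\bar{S} \in \mathcal{T}(G/N,H/N)$. I would also check that $\pi$ restricts to a bijection $S \to \bar{S}$: if $s_1, s_2 \in S$ with $s_1 N = s_2 N$, then $s_1 s_2^{-1} \in N \subseteq H$, forcing $Hs_1 = Hs_2$ and hence $s_1 = s_2$.

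The heart of the argument is to show that this restriction $\pi : S \to \bar{S}$ is an \emph{isomorphism} of the induced right loops. Writing $\circ$ for the operation on $S$ and $\bar{\circ}$ for the operation on $\bar{S}$, the product $x \circ y$ is by definition the unique element of $S$ lying in the coset $Hxy$; applying $\pi$, the element $(x\circ y)N$ lies in $\pi(Hxy) = (H/N)(xN)(yN)$ and also in $\bar{S}$, so by the uniqueness defining $\bar{\circ}$ we get $(x\circ y)N = (xN)\,\bar{\circ}\,(yN)$. Thus $S \cong \bar{S}$ as right loops, and in particular $S$ and $\bar{S}$ are isotopic (take $\alpha = \beta = \gamma = \pi$). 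Therefore the assignment $[S] \mapsto [\bar{S}]$ is a well-defined map $\Phi : \mathcal{I}{tp}(G,H) \to \mathcal{I}{tp}(G/N,H/N)$.

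Next I would verify that $\Phi$ is a bijection. For surjectivity, given $T \in \mathcal{T}(G/N, H/N)$, I lift it: take $1$ for the identity coset and, for each other right coset of $H/N$, an arbitrary $\pi$-preimage of the element of $T$ in that coset; by the coset correspondence these preimages assemble into some $S \in \mathcal{T}(G,H)$ with $\bar{S} = T$, so $[T] = [\bar{S}] = \Phi([S])$. For injectivity, if $S, T \in \mathcal{T}(G,H)$ with $\bar{S}$ isotopic to $\bar{T}$, then chaining $S \cong \bar{S}$, $\bar{S}$ isotopic to $\bar{T}$, and $\bar{T} \cong T$ — using that isotopy of groupoids is an equivalence relation and that an isomorphism is a special isotopy — yields that $S$ is isotopic to $T$, i.e. $[S] = [T]$. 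Since $G$ is finite, both $\mathcal{I}{tp}(G,H)$ and $\mathcal{I}{tp}(G/N,H/N)$ are finite, so $\Phi$ being a bijection gives $|\mathcal{I}{tp}(G,H)| = |\mathcal{I}{tp}(G/N,H/N)|$.

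The only step I expect to need real care is the identification of the two induced operations under $\pi$ in the second paragraph; once one observes that $\pi$ sends the coset $Hxy$ onto $(H/N)(xN)(yN)$ and that $\pi|_S$ is injective, this identification is forced, and the remainder is formal bookkeeping with the equivalence relation ``isotopic'' and with lifting transversals along $\pi$.
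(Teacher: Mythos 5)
Your proposal is correct and follows essentially the same route as the paper: both arguments hinge on the observation that the quotient map restricts to a right-loop isomorphism $S \to \nu(S)$ for each $S \in \mathcal{T}(G,H)$, and then transport isotopies back and forth through these isomorphisms (the paper does this by explicitly conjugating the triple $(\alpha,\beta,\gamma)$ by the maps $\delta_i$, which is exactly your chaining of isomorphisms with isotopies). You simply spell out more of the bookkeeping — the coset correspondence, the verification that $\pi|_S$ respects the induced operations, and the lifting of transversals for surjectivity — all of which the paper leaves implicit.
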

\begin{proof} Let $S \in \mathcal{T}(G,H)$. Clearly $S \mapsto \nu (S)=\{Nx~|~x\in S\}$, where $\nu$ is the quotient map from $G$ to $G/N$, is a surjective map from $\mathcal{T}(G,H)$ to $\mathcal{T}(G/N,H/N)$ such that the corresponding NRTs are isomorphic. 

Let $S_1, S_2 \in \mathcal{T}(G,H)$. Let $\delta_1:S_1 \rightarrow \nu(S_1)$ and $\delta_2:S_2\rightarrow \nu(S_2)$ be isomorphisms defined by $\delta_i(x)=xN$ $(x \in S_i, i=1,2)$. Assume that $(\alpha, \beta, \gamma)$ is an isotopy from $S_1$ to $S_2$. Then $(\delta_2\alpha\delta_1^{-1}, \delta_2\beta\delta_1^{-1}, \delta_2\gamma\delta_1^{-1})$ is an isotopy from $\nu(S_1)$ to $\nu(S_2)$. 
Conversely, if $(\alpha^{\prime}, \beta^{\prime}, \gamma^{\prime})$ is an isotopy from $\nu(S_1)$ to $\nu(S_2)$, then $(\delta_2^{-1}\alpha^{\prime}\delta_1, \delta_2^{-1}\beta^{\prime}\delta_1, \delta_2^{-1}\gamma^{\prime}\delta_1)$ is an isotopy from $S_1$ to $S_2$. Thus $|\mathcal{I}{tp}(G,H)|=|\mathcal{I}{tp}(G/N,H/N)|$.  
\end{proof}
\begin{rem} Let $G$ be a group and $H$ be a non-normal subgroup of $G$ of index $3$. Then by Proposition \ref{3p3} and Example \ref{3e1}, $|\mathcal{I}{tp}(G,H)|=2$. The converse of this is false, as we have following example.
\end{rem}
\begin{example} Let $G=Alt(4)$, the alternating group of degree $4$ and $H=\{I,x=(1,2)(3,4)\}$. In \cite[Lemma 2.7, p. 6]{vk}, we have found that the number of isomorphism classes of NRTs in $\mathcal{T}(G,H)$ is five whose representatives are given by $S_1=\{I,z,yz^{-1},z^{-1},yz,y\}$, $S_2=(S_1\setminus \{yz\}) \cup \{xyz\}$, $S_3=(S_1\setminus \{yz,yz^{-1}\}) \cup \{xyz,xyz^{-1}\}$, $S_4=(S_1\setminus \{yz^{-1}\}) \cup \{xyz^{-1}\}$ and $S_5=(S_1\setminus \{z\}) \cup \{xz\}$, where $z=(1,2,3)$ and $y=(1,3)(2,4)$. We note that $S_1$ is not isotopic to $S_i$ $(2 \leq i \leq 5)$, for left non-singular elements of $S_1$ are $I,y$ and $z$ but $I,y$ are those of $S_i$ $(2 \leq i \leq 5)$ (see Proposition \ref{3p2}). It can be checked that $(\alpha_2^j, \beta_2^j, \gamma_2^j)$ $(3 \leq j \leq 5)$ where $\alpha_2^3={I, z, yz^{-1}, z^{-1}, xyz, y  \choose I, z^{-1}, xyz, z, xyz, y \ \ }$, $\beta_2^3=\gamma_2^3={I, z, yz^{-1}, z^{-1}, xyz, y  \choose z, I, xyz^{-1}, z^{-1}, y, xyz \ \ }$; $\alpha_2^4={I, z, yz^{-1}, z^{-1}, xyz, y  \choose I, yz, z^{-1}, xyz^{-1}, z, y \ \ }$, $\beta_2^4=\gamma_2^4={I, z, yz^{-1}, z^{-1}, xyz, y  \choose yz, xyz^{-1}, y, I, z^{-1}, z \ \ }$ and $\alpha_2^5={I, z, yz^{-1}, z^{-1}, xyz, y  \choose I, z, xz^{-1}, yz^{-1}, yz, y \ \ }$, $\beta_2^5=\gamma_2^5={I, z, yz^{-1}, z^{-1}, xyz, y  \choose z, xz^{-1}, I, y, yz^{-1}, yz \ \ }$ is an isotopy from $S_2$ to $S_j$.
\end{example}

\begin{prop}\label{3p5} Let $G$ be a finite group and $H$ be a corefree subgroup of $G$ such that $|\mathcal{I}{tp}(G,H)|=1$. Then 
\begin{enumerate}
  \item[(i)] no $S \in \mathcal{T}(G,H)$ is a loop transversal.
	\item[(ii)] $\langle S \rangle=G$ for all $S \in \mathcal{T}(G,H)$.
\end{enumerate}
\end{prop}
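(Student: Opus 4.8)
The plan is to handle both parts by the same device: use $|\mathcal{I}tp(G,H)|=1$ to propagate a structural property from one NRT to all of them, and then use corefreeness of $H$ to exhibit an NRT violating it. The basic tool is that corefreeness makes $\chi_S$ injective (its kernel is $Core_G(H)=1$), so $\chi_S(\langle S\rangle)=G_SS$ gives $\langle S\rangle\cong G_SS$ for every $S\in\mathcal{T}(G,H)$; combined with the fact recalled just before Proposition~\ref{3p2} that isotopic right loops $S_1,S_2$ satisfy $G_{S_1}S_1\cong G_{S_2}S_2$, we get: \emph{isotopic NRTs generate isomorphic subgroups of $G$}. I assume $H\neq 1$ (otherwise the assertions are degenerate, since the only NRT is $G$ itself); then $H$ is not normal, because a normal corefree subgroup coincides with its own core and hence is trivial, so there is $g\in G$ with $K:=g^{-1}Hg\neq H$.

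\emph{Part (i).} Suppose some $S\in\mathcal{T}(G,H)$ is a loop transversal. Since $|\mathcal{I}tp(G,H)|=1$, every NRT is isotopic to $S$, so by Corollary~\ref{3p2c} \emph{every} NRT of $H$ in $G$ is a loop transversal; I shall contradict this by building a non-loop NRT. The key is the elementary criterion that an element $a$ of an NRT $S$ is left non-singular in $(S,\circ)$ if and only if $S$ is also a right transversal of $a^{-1}Ha$ (indeed $L_a^{\circ}$ is injective precisely when $Hax=Hay$ forces $x=y$ for $x,y\in S$, i.e. when no two distinct members of $S$ share a right coset of $a^{-1}Ha$). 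Pick $a$ in the coset $Hg$, so $a^{-1}Ha=K$, and $k_0\in K\setminus H$; then $b:=k_0a$ satisfies $ba^{-1}=k_0\in K$, $b\neq a$, and $b\notin Ha$ (since $k_0\notin H$). A short counting argument shows that, once $[H:H\cap K]>2$, the set $\bigcup_{a\in Hg}(K\setminus H)a=(K\setminus H)Hg$ is not contained in $H$ (after multiplying by $g^{-1}$ it has at least $2|H|$ elements, against $|H|$), so one may choose $a\in Hg$ with $b=k_0a\notin Hg\cup H$; the three cosets $H,Hg,Hb$ are then distinct, and any NRT $S$ whose representatives include $1,a,b$ fails to be a loop transversal at $a$ — a contradiction. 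The remaining case, $[H:H\cap K]=2$ for every conjugate $K\neq H$ (which forces $H$ to be an elementary abelian $2$-group), is dispatched by a slightly more delicate choice of representatives using a third coset.

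\emph{Part (ii).} Suppose $\langle S_0\rangle\neq G$ for some $S_0\in\mathcal{T}(G,H)$. By the invariance noted above, all NRTs of $H$ in $G$ generate subgroups of a single common order $m$, and $m<|G|$. I would induct on $|G|$. For any NRT $S$ with $K=\langle S\rangle$ one has $HK=G$ (as $S\subseteq K$ is a transversal), $H\cap K$ is corefree in $K$ (its core in $K$ equals the kernel of the transitive action of $K$ on $H\backslash G$, which is $K\cap Core_G(H)=1$), and every NRT of $H\cap K$ in $K$ is simultaneously an NRT of $H$ in $G$ carrying the very same induced right loop, whence $|\mathcal{I}tp(K,H\cap K)|=1$. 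Consequently, if $K$ were contained in some proper subgroup $M$ with $K\subsetneq M\subsetneq G$, the inductive hypothesis applied to $(M,H\cap M)$ would furnish an NRT of $H$ in $G$ generating $M$, contradicting the maximality of $m$; thus \emph{every} NRT of $H$ in $G$ generates a maximal subgroup of $G$, of order $m$. To finish, take an NRT $S_1$; since $\langle S_1\rangle\neq G$ forces $H\not\subseteq\langle S_1\rangle$, there is $h_0\in H\setminus\langle S_1\rangle$, and if $S_1$ possesses a redundant non-identity representative $x$ (so that $\langle S_1\setminus\{x\}\rangle=\langle S_1\rangle$) then the NRT obtained from $S_1$ by replacing $x$ with $h_0x$ generates $\langle\langle S_1\rangle,h_0\rangle=G$, the desired contradiction. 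The presence of such a redundant representative, in some NRT generating a maximal subgroup, is to be extracted from the inductive situation inside $\langle S_1\rangle$.

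\emph{Expected main obstacle.} The crux is this last step of (ii) — in effect, proving that a corefree subgroup of a finite group always admits a generating NRT and closing the redundancy argument (or finding a cleaner substitute). The borderline counting in (i) is, by contrast, a routine secondary nuisance.
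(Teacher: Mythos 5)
Your overall strategy parallels the paper's: use $|\mathcal{I}tp(G,H)|=1$ to propagate a property from one NRT to all of them (via Corollary \ref{3p2c} for (i), and via the invariance $G_{S_1}S_1\cong G_{S_2}S_2$ of isotopic right loops for (ii)), then exhibit an NRT violating that property. However, in each part the step that actually produces the contradiction is left unproved, and in each case it is precisely a result the paper imports by citation. For (i), the paper constructs nothing: it quotes \cite[Corollary 2.9, p.74]{rltr}, which asserts that if every NRT of $H$ in $G$ is a loop transversal then $H\trianglelefteq G$, and a normal corefree (nontrivial) subgroup is impossible. You instead attempt to build a non-loop NRT by hand. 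Your criterion (left non-singularity of $a\in S$ is equivalent to $S$ being a transversal of $a^{-1}Ha$) is correct, and the choice $b=k_0a$ with $k_0\in a^{-1}Ha\setminus H$ is the right germ of an idea, but the argument does not close: the counting needed to guarantee $b\notin H$ is only sketched, and the ``remaining case'' $[H:H\cap K]=2$ is not handled at all but merely declared dispatchable (the side claim that it forces $H$ to be elementary abelian of exponent $2$ is also unjustified). As written, (i) is not established.

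For (ii), you correctly isolate the crux: the existence of a single $T\in\mathcal{T}(G,H)$ with $\langle T\rangle=G$. Granting that, $G_TT\cong\langle T\rangle=G$ and the isotopy invariance of $G_SS$ finish the proof exactly as in the paper. The paper obtains this generating transversal by citing \cite{cam}, where it is shown that a corefree subgroup of a finite group always admits a generating NRT. Your inductive substitute does make some correct reductions (the passage to $(M,H\cap M)$ with $HM=G$, the corefreeness of $H\cap M$ in $M$ via the kernel of the action on $H\backslash G$, and the conclusion that every $\langle S\rangle$ would have to be maximal of a fixed order), but the final step --- producing a redundant non-identity representative whose replacement by $h_0x$ with $h_0\in H\setminus\langle S_1\rangle$ enlarges the generated subgroup --- is asserted rather than proved, as you yourself acknowledge. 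So both parts contain genuine gaps; the two external theorems the paper leans on are exactly the pieces your attempt fails to supply.
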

\begin{proof} $(i)$ If possible, assume that $T \in \mathcal{T}(G,H)$ is a loop transversal. Then by Corollary \ref{3p2c}, each $S \in \mathcal{T}(G,H)$ is a loop transversal. By \cite[Corollary 2.9, p.74]{rltr}, $H \trianglelefteq G$. This is a contradiction.

\vspace{0.2 cm}
\noindent $(ii)$ Since $Core_G(H)=\{1\}$, by \cite{cam}, there exists $T \in \mathcal{T}(G,H)$ such that $\langle T \rangle=G$. This implies $G_TT \cong \langle T \rangle=G$. By the discussion in the second paragraph of this section, $\langle S \rangle=G$ for all $S \in \mathcal{T}(G,H)$.
\end{proof}
Let us recall from \cite[Introduction, p. 277]{kl} that a \textit{free global transversal} $S$ of a subgroup $H$ of a group $G$ is an NRT for all conjugates of $H$ in $G$. We see from \cite[Proposition 4.3.6, p. 103]{smth} that a free global transversal is a loop transversal. We now have following: 
\begin{prop}\label{3p7} Let $G$ be a finite nilpotent group and $H$ be a subgroup of $G$ such that $|\mathcal{I}{tp}(G,H)|=1$. Then $H \trianglelefteq G$.
\end{prop}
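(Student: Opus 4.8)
The plan is to reduce to the corefree case via Proposition \ref{3p3} and then, in that case, to produce a free global transversal, which will contradict Proposition \ref{3p5}(i). Precisely, set $N=Core_G(H)$. By Proposition \ref{3p3} we have $|\mathcal{I}{tp}(G/N,H/N)|=|\mathcal{I}{tp}(G,H)|=1$, while $H/N$ is corefree in the (still nilpotent) group $G/N$. If $H\not\trianglelefteq G$ then $H/N\neq\{1\}$, so it is enough to derive a contradiction under the standing assumptions that $H$ is a non-trivial corefree subgroup of a finite nilpotent group $G$ with $|\mathcal{I}{tp}(G,H)|=1$. I would obtain this contradiction by exhibiting a \emph{free global transversal} of $H$ in $G$: by \cite[Proposition 4.3.6, p.103]{smth} it is then a loop transversal, contradicting Proposition \ref{3p5}(i).

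So the heart of the argument is the claim that \emph{every} subgroup $H$ of a finite nilpotent group $G$ admits a free global transversal, i.e.\ a single NRT which is simultaneously an NRT for all $G$-conjugates of $H$. First I would reduce to $p$-groups: writing $G=P_1\times\cdots\times P_k$ for the Sylow decomposition, the coprimality of the $|P_i|$ forces $H=(H\cap P_1)\times\cdots\times(H\cap P_k)$, each conjugate $H^g$ decomposes accordingly as a direct product of conjugates of the $H\cap P_i$, and the direct product of free global transversals of the $H\cap P_i$ in the $P_i$ is a free global transversal of $H$ in $G$. For a $p$-group $P$ I would argue by induction on $|P|$. If $Core_P(H)\neq\{1\}$, pass to $P/Core_P(H)$, where $H/Core_P(H)$ is corefree, obtain a free global transversal there by induction, and lift it, using that the right cosets of $H^g$ in $P$ correspond to those of $H^gCore_P(H)/Core_P(H)$ in the quotient. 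If $H$ is corefree and non-trivial (the case $H=P$ being trivial), choose $Z_1\leq Z(P)$ with $|Z_1|=p$; since $Z_1\cap H^g=\{1\}$ for every $g$ we get $|H^gZ_1:H^g|=p$. By induction $HZ_1/Z_1$ has a free global transversal in $P/Z_1$; lifting it yields a set $T'\subseteq P$ which is simultaneously an NRT of $H^gZ_1$ in $P$ for every $g$, and I claim that $T=T'Z_1$ is then simultaneously an NRT of $H^g$ in $P$ for every $g$.

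The step I expect to be the only real computation, and the main obstacle, is verifying this last claim. Write $Z_1=\langle z\rangle$, so $T=\{z^i s: s\in T',\ 0\leq i<p\}$. The key point is that $z$ is central: if $H^g z^i s=H^g z^j s'$ with $s,s'\in T'$, then $z^{i-j}ss'^{-1}\in H^g$, hence $ss'^{-1}\in H^g Z_1$, so $s=s'$ because $T'$ is an NRT of $H^gZ_1$; then $z^{i-j}\in H^g\cap Z_1=\{1\}$, so $i=j$. Thus distinct elements of $T$ lie in distinct right cosets of $H^g$, and since $|T|=p\,|P:H^gZ_1|=|P:H^g|$ this shows $T\in\mathcal{T}(P,H^g)$, and $1\in T$. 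Granting the claim, the proof of Proposition \ref{3p7} is finished as above: the free global transversal of $H/N$ in $G/N$ is a loop transversal, which contradicts Proposition \ref{3p5}(i), so $H\trianglelefteq G$.
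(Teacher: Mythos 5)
Your proof is correct, and it reaches the same intermediate milestone as the paper --- the existence of a loop transversal of $H/N$ in $G/N$ --- but by a genuinely different route. The paper simply cites an external result (its reference [Theorem B, p.~284] of the paper on free global transversals) to assert that $H/N$ has a loop transversal in $G/N$, and then concludes exactly as you do: Corollary \ref{3p2c} forces every NRT to be a loop transversal, whence $H/N \trianglelefteq G/N$ by the quoted result of Lal, and so $H \trianglelefteq G$. (Your detour through Proposition \ref{3p5}(i) and a contradiction is just a repackaging of this last step, and your observation that $H/N\neq\{1\}$ is needed there is a point the paper glosses over.) What you do differently is to \emph{prove} the existence of a free global transversal for an arbitrary subgroup of a finite nilpotent group: reduction to $p$-groups via the Sylow decomposition, then induction on $|P|$ using either the core or a central subgroup $Z_1$ of order $p$ with $Z_1\cap H^g=\{1\}$, with the verification that $T'Z_1$ is simultaneously an NRT of every conjugate $H^g$ resting on the centrality of $Z_1$ and a cardinality count. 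I checked that verification and the two reduction steps; they are sound. The trade-off is clear: the paper's proof is three lines but leans on an uncited-in-the-bibliography external theorem, while yours is self-contained and makes visible exactly which structural feature of nilpotent groups (a central subgroup of prime order avoiding all conjugates of a corefree subgroup) drives the result.
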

\begin{proof} Let $N=Core_G(H)$. By Proposition \ref{3p3}, $|\mathcal{I}{tp}(G/N,H/N)|=1$. Now by \cite[Theorem B, p. 284]{kl}, there exists a loop transversal of $H/N$ in $G/N$. This means that each $S \in \mathcal{T}(G,H)$ is a loop transversal (Corollary \ref{3p2c}). Thus $H/N \trianglelefteq G/N$ (\cite[Corollary 2.9, p.74]{rltr}) and so $H \trianglelefteq G$. 
\end{proof}
\begin{prop}\label{3p8} Let $G$ be a finite solvable group and $H$ be a subgroup of $G$. Suppose that the greatest common divisor $(|H|,[G:H])=1$. Then if $|\mathcal{I}{tp}(G,H)|=1$, then $H \trianglelefteq G$.
\end{prop}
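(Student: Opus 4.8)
The plan is to reduce to the corefree case and then invoke a known transversal-existence theorem, exactly as in the proof of Proposition \ref{3p7}, but replacing the nilpotency hypothesis by the coprimality hypothesis together with the Schur--Zassenhaus theorem. First I would set $N = Core_G(H)$ and pass to $\bar G = G/N$, $\bar H = H/N$. By Proposition \ref{3p3} we have $|\mathcal{I}{tp}(\bar G, \bar H)| = 1$, and $\bar H$ is corefree in $\bar G$. Note that $\bar G$ is still finite solvable (a quotient of a solvable group), and $(|\bar H|, [\bar G : \bar H]) = 1$ still holds, since $|\bar H|$ divides $|H|$ and $[\bar G:\bar H] = [G:H]$. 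So it suffices to show that a corefree subgroup $\bar H$ of a finite solvable group $\bar G$ with $(|\bar H|,[\bar G:\bar H])=1$ and $|\mathcal{I}{tp}(\bar G,\bar H)|=1$ must be trivial (hence normal, giving $H = N \trianglelefteq G$).

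Now I would argue that such an $\bar H$ has a loop transversal in $\bar G$, which by Corollary \ref{3p2c} forces every NRT to be a loop transversal, hence $\bar H \trianglelefteq \bar G$ by \cite[Corollary 2.9, p.74]{rltr}, and then corefreeness gives $\bar H = \{1\}$. The natural candidate for a loop transversal is a \emph{free global transversal} in the sense of \cite{kl}: by the remark preceding Proposition \ref{3p7}, a free global transversal is automatically a loop transversal. The existence of a free global transversal for $\bar H$ in $\bar G$ should follow from a Schur--Zassenhaus type argument in \cite{kl}: since $(|\bar H|, [\bar G:\bar H]) = 1$ and $\bar G$ is solvable, there is a complement $K$ to every conjugate — more precisely, one wants a single subset that is simultaneously a transversal to all conjugates of $\bar H$. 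I expect \cite{kl} contains precisely such a statement (analogous to their Theorem B used in Proposition \ref{3p7}) for groups in which $\bar H$ admits a normal complement or a well-behaved system of complements; the coprimality plus solvability is the standard hypothesis guaranteeing this via Hall's theorems and Schur--Zassenhaus.

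The main obstacle is pinning down the right existence statement from \cite{kl} (or elsewhere) that produces a \emph{free global transversal}, i.e. a common NRT for all conjugates of $\bar H$, under the coprime-order hypothesis; the purely loop-theoretic part (free global transversal $\Rightarrow$ loop transversal $\Rightarrow$ normality $\Rightarrow$ triviality in the corefree case) is routine given the cited results. An alternative, if no off-the-shelf statement is available, is to construct the global transversal directly: by Schur--Zassenhaus $\bar H$ has a complement $K$ in $H K$... but since $\bar H$ need not be normal in $\bar G$ one instead works inside a Hall subgroup — take a Hall $\pi$-subgroup $\bar H^{g}$-containing subgroup and use conjugacy of Hall subgroups in solvable groups (Hall's theorem) to show that a fixed complement behaves uniformly across all conjugates. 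I would present the proof in the short form mirroring Proposition \ref{3p7}, citing the appropriate theorem of \cite{kl} for the existence of the loop (free global) transversal, and remark that solvability enters only to guarantee that existence.
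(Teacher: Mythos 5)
Your overall skeleton --- produce one loop transversal, use Corollary \ref{3p2c} to conclude that every NRT of $H$ is a loop, and then apply \cite[Corollary 2.9, p.74]{rltr} to get normality --- is exactly the paper's. The problem is that the one step carrying all the content, namely the existence of a loop transversal, is left as a gap: you defer it to an unspecified ``free global transversal'' theorem that you \emph{hope} is in \cite{kl}, and your fallback sketch is muddled, drifting into worries about complements to all conjugates of $\bar H$ and a transversal that ``behaves uniformly across all conjugates.'' None of that machinery is needed, and no reduction to the corefree quotient via Proposition \ref{3p3} is needed either (it is harmless, but it buys you nothing here).

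The paper's existence argument is a one-liner. Let $\pi$ be the set of primes dividing $|H|$; since $G$ is solvable, Hall's theorem gives a Hall $\pi^{\prime}$-subgroup $S$ of $G$. Because $(|H|,[G:H])=1$, the $\pi^{\prime}$-part of $|G|$ is exactly $[G:H]$, so $|S|=[G:H]$ and $S\cap H=\{1\}$; hence $|HS|=|H||S|=|G|$, every right coset of $H$ meets $S$ in exactly one point, and $1\in S$, i.e.\ $S\in\mathcal{T}(G,H)$. Since $S$ is a subgroup, for $x,y\in S$ one has $Hxy\cap S=\{xy\}$, so the induced operation $\circ$ on $S$ is just the group multiplication of $S$; thus $(S,\circ)$ is a group, in particular a loop transversal. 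That single observation replaces your entire appeal to \cite{kl}: you do not need a transversal common to all conjugates of $H$, only one NRT whose induced right loop is a loop, and a Hall complement supplies it directly. As written, your proof is incomplete until you either locate and verify the precise statement you want from \cite{kl} or carry out the Hall-subgroup construction explicitly.
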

\begin{proof} Let $\pi$ be the set of primes dividing $|H|$. Let $S$ be a Hall $\pi^{\prime}$-subgroup of $G$. Then $S \in \mathcal{T}(G,H)$. Suppose that $|\mathcal{I}{tp}(G,H)|=1$. Then by Corollary \ref{3p2c} all members of $\mathcal{T}(G,H)$ are loops. Hence by \cite[Corollary 2.9, p.74]{rltr}, $H \trianglelefteq G$.
\end{proof}
\begin{cor}\label{3p8c} Let $G$ be a finite group such that $|G|$ is a square-free number. Let $|H|$ be a subgroup of $G$ such that $|\mathcal{I}{tp}(G,H)|=1$. Then $H \trianglelefteq G$.
\end{cor}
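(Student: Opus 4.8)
The plan is to reduce the statement directly to Proposition \ref{3p8}, whose hypotheses are almost automatic under the square-free assumption. Two things need to be checked: that $G$ is solvable, and that $(|H|,[G:H])=1$.

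First I would observe that the coprimality condition is free. Since $|G|=|H|\cdot[G:H]$ and $|G|$ is square-free, no prime $p$ can divide both $|H|$ and $[G:H]$ (otherwise $p^{2}\mid |G|$). Hence $(|H|,[G:H])=1$ for \emph{every} subgroup $H$ of $G$.

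Next I would argue that $G$ is solvable. This is the classical fact that a finite group of square-free order is metacyclic (in particular supersolvable, hence solvable); one can invoke Hölder's theorem on groups of square-free order, or derive solvability from Burnside's normal $p$-complement theorem applied to the largest prime divisor together with induction on $|G|$. I expect this to be the only point requiring an external citation, and it is the step I would flag as the ``main obstacle'' only in the bookkeeping sense — mathematically it is standard.

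Finally, with $G$ solvable and $(|H|,[G:H])=1$, Proposition \ref{3p8} applies verbatim: the hypothesis $|\mathcal{I}{tp}(G,H)|=1$ then forces $H\trianglelefteq G$. This completes the argument; essentially the corollary is just the special case of Proposition \ref{3p8} in which square-freeness supplies both the solvability of $G$ and the coprimality of $|H|$ with its index simultaneously.

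\qed
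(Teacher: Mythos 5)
Your proposal is correct and follows exactly the paper's own route: the paper likewise deduces solvability of $G$ from square-freeness of $|G|$ (citing Rotman) and then invokes Proposition \ref{3p8}. Your explicit verification that $(|H|,[G:H])=1$ is automatic from $|G|=|H|\cdot[G:H]$ being square-free is a detail the paper leaves implicit, but the argument is the same.
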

\begin{proof} Since $|G|$ is a square-free number, $G$ is solvable group (\cite[Corollary 7.54, p. 197]{jjr}). Now, the corollary follows from the Proposition \ref{3p8}.
\end{proof}
Let $(S,\circ)$ be a right loop. A permutation $\eta:S\rightarrow S$ is called a \textit{right pseudo-automorphism}(resp. \textit{left pseudo-automorphism}) if there exists $c \in S$ (resp. left non-singular element $c \in S$) such that $\eta(x \circ y) \circ c=\eta(x) \circ (\eta(y) \circ c)$ (resp. $c \circ \eta(x \circ y)=(c \circ \eta(x)) \circ \eta(y)$) for all $x, y \in S$. The element $c \in S$ is called as \textit{companion} of $\eta$. By the same arguement following \cite[Lemma 1, p. 215]{dsk}, we record following proposition:
\begin{prop}\label{3p9} Let $(S,\circ)$ be a right loop. A permutation $\eta:S\rightarrow S$ is right pseudo-automorphism (resp. left pseudo-automorphism) with companiion $c$ if and only if $(\eta, R_c \eta, R_c \eta)$ (resp. $(L_c \eta, \eta, L_c \eta)$)  is an autotopy of $S$. Moreover, if $(\alpha, \beta, \gamma)$ is an autotopy on $S$, then $\alpha(1)=1\Leftrightarrow \beta = \gamma \Leftrightarrow \alpha$ is a right pseudo-automorphism with companion $\beta(1)$ (resp. $\beta(1)=1\Leftrightarrow \alpha = \gamma \Leftrightarrow \beta$ is a left pseudo-automorphism with companion $\alpha(1)$).
\end{prop}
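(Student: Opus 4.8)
The plan is to establish the ``iff'' part by directly matching the autotopy equation against the defining identity of a pseudo-automorphism, and then to deduce the ``moreover'' part by substituting the identity $1$ of $S$ into the autotopy equation and exploiting the bijectivity of the three coordinate maps. For the first part, observe that $(\eta,R_c\eta,R_c\eta)$ is an autotopy of $(S,\circ)$ exactly when $\eta(x)\circ(\eta(y)\circ c)=\eta(x\circ y)\circ c$ holds for all $x,y\in S$, which is precisely the defining identity for $\eta$ to be a right pseudo-automorphism with companion $c$; the only supplementary point is that $R_c\eta$ is a permutation, which holds because $R_c$ is a bijection of the right quasigroup $S$ and $\eta$ is a permutation. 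Dually, $(L_c\eta,\eta,L_c\eta)$ is an autotopy exactly when $(c\circ\eta(x))\circ\eta(y)=c\circ\eta(x\circ y)$ for all $x,y$, which is the left pseudo-automorphism identity; here one needs $L_c$ to be a bijection, and this is exactly why $c$ is required to be left non-singular.

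For the ``moreover'' part, let $(\alpha,\beta,\gamma)$ be an autotopy, so that $\alpha(x)\circ\beta(y)=\gamma(x\circ y)$ for all $x,y$. I would first record the two specializations: setting $y=1$ gives $\gamma=R_{\beta(1)}\alpha$, and setting $x=1$ gives $\gamma=L_{\alpha(1)}\beta$; since $\gamma$ and $\beta$ are permutations, $L_{\alpha(1)}=\gamma\beta^{-1}$ is a permutation, so $\alpha(1)$ is automatically left non-singular, which is what allows $\alpha(1)$ to serve as a companion in the left-handed statement. The equivalence $\alpha(1)=1\Leftrightarrow\beta=\gamma$ then follows: if $\alpha(1)=1$ then $\gamma(y)=\alpha(1)\circ\beta(y)=\beta(y)$; conversely, if $\beta=\gamma$ then $\alpha(1)\circ\beta(y)=\beta(y)$ for every $y$, and evaluating at the $y$ with $\beta(y)=1$ yields $\alpha(1)=1$. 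For the remaining equivalence, if $\beta=\gamma$ then $\beta=\gamma=R_{\beta(1)}\alpha$, so the autotopy is $(\alpha,R_{\beta(1)}\alpha,R_{\beta(1)}\alpha)$ and the first part identifies $\alpha$ as a right pseudo-automorphism with companion $\beta(1)$; conversely, if $\alpha$ is a right pseudo-automorphism with companion $c=\beta(1)$, then $\alpha(x\circ y)\circ c=\alpha(x)\circ(\alpha(y)\circ c)$, while the relation $\gamma=R_c\alpha$ rewrites the autotopy equation as $\alpha(x)\circ\beta(y)=\alpha(x\circ y)\circ c$, so $\alpha(x)\circ\beta(y)=\alpha(x)\circ(\alpha(y)\circ c)$ for all $x,y$, and evaluating at $x=\alpha^{-1}(1)$ gives $\beta(y)=\alpha(y)\circ c=\gamma(y)$. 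The parenthetical ``resp.'' chain is the mirror image: interchange the first two coordinates, use $x=1$ (and the relation $\gamma=L_{\alpha(1)}\beta$) in place of $y=1$, invoke the left version of the first part, and evaluate at $y=\beta^{-1}(1)$ where needed.

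The only genuine pitfall, and the reason one cannot simply transcribe the classical loop-theoretic proof, is that a right loop need not admit left cancellation, so an identity of the form $\alpha(x)\circ u=\alpha(x)\circ v$ does not by itself force $u=v$. I avoid this throughout by never cancelling on the left: instead I substitute the argument at which $\alpha$ (respectively $\beta$) attains the value $1$ and use the two-sided identity law $1\circ z=z=z\circ 1$. With that device in hand, the remainder is the routine bookkeeping of threading the equivalences together and checking that each auxiliary map is indeed a bijection of $S$.
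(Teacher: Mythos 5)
Your proof is correct and follows essentially the same route the paper intends: the paper gives no written proof but defers to the argument following Drisko's Lemma~1, which is exactly the direct verification you carry out (match the autotopy identity to the pseudo-automorphism identity, then specialize $x=1$ and $y=1$ to get $\gamma=R_{\beta(1)}\alpha=L_{\alpha(1)}\beta$ and thread the equivalences). Your explicit device of substituting at $\alpha^{-1}(1)$ or $\beta^{-1}(1)$ instead of cancelling on the left is precisely the adaptation needed to make Drisko's loop argument valid for right loops, so the proposal is a faithful (and slightly more careful) rendering of the cited proof.
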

Let $S$ be a right loop. Denote $A_1(S)=\{(\alpha, \beta, \gamma) \in \mathcal{U}(S)|\alpha(1)=1\}$ and $A_2(S)=\{(\alpha, \beta, \gamma) \in \mathcal{U}(S)|\beta(1)=1\}$. It can be checked that $A_1(S)$ and $A_2(S)$ are subgroups of $\mathcal{U}(S)$ and $A_1(S) \cap A_2(S) = Aut(S)$. Since by Proposition \ref{3p2}, the left non-singular elements are in bijection for two isotopic right loops, we obtain that \cite[Lemma 3, p. 217]{dsk}, \cite[Lemma 6, p. 219]{dsk} and \cite[Lemma 8, p. 219]{dsk} are also true in the case of right loops and can be proved by the same argument used there. Therefore, we also have following extensions of \cite[Corollary 7, p. 219]{dsk} and \cite[Corollary 9, p. 220]{dsk} respectively: 

\begin{prop}\label{3p10} Let $S$ be a right loop with transitive automorphism group. Then for $i=1,2$ either $A_i(S)=Aut(S)$ or the right cosets of $Aut(S)$ in $A_1(S)$ are in one-to-one correspondence with the elements of $S$ and the right cosets of $Aut(S)$ in $A_2(S)$ are in one-to-one correspondence with the left non-singular elements of $S$.
\end{prop}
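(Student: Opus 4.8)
The plan is to translate the description of $A_1(S)$ and $A_2(S)$ furnished by Proposition~\ref{3p9} into a ``companion map'' on each, and then to use the transitivity hypothesis to force a dichotomy. Define $\phi_1\colon A_1(S)\to S$ by $\phi_1(\alpha,\beta,\gamma)=\beta(1)$: by Proposition~\ref{3p9}, for $(\alpha,\beta,\gamma)\in A_1(S)$ we have $\beta=\gamma$ and $\alpha$ is a right pseudo-automorphism with companion $\beta(1)$, so $\phi_1$ really records companions. Similarly define $\phi_2\colon A_2(S)\to S$ by $\phi_2(\alpha,\beta,\gamma)=\alpha(1)$; for $(\alpha,\beta,\gamma)\in A_2(S)$ we have $\alpha=\gamma$ and $\beta$ is a left pseudo-automorphism with companion $\alpha(1)$, and since a companion of a left pseudo-automorphism is left non-singular by definition, $\phi_2(A_2(S))\subseteq L$, where $L$ denotes the set of left non-singular elements of $S$ (note $1\in L$, as $L_1$ is the identity map). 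Both maps send the identity autotopy to $1$, and $\phi_i(g)=1$ forces $g\in A_1(S)\cap A_2(S)=Aut(S)$; hence $\phi_i^{-1}(1)=Aut(S)$.

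Next I would show that $\phi_i$ is constant on each coset $g\,Aut(S)$ $(g\in A_i(S))$ and takes distinct values on distinct such cosets. Constancy: for $\psi\in Aut(S)$, $g\psi=(\alpha\psi,\beta\psi,\gamma\psi)$ with $(\beta\psi)(1)=\beta(1)$ and $(\alpha\psi)(1)=\alpha(1)$, since $\psi(1)=1$. Separation: if $g=(\alpha,\beta,\gamma)$ and $g'=(\alpha',\beta',\gamma')$ lie in $A_i(S)$ with $\phi_i(g)=\phi_i(g')$, then $g^{-1}g'\in A_i(S)$ as $A_i(S)$ is a subgroup; for $i=1$ its second coordinate sends $1$ to $\beta^{-1}\bigl(\beta'(1)\bigr)=\beta^{-1}\bigl(\phi_1(g')\bigr)=\beta^{-1}\bigl(\phi_1(g)\bigr)=\beta^{-1}\bigl(\beta(1)\bigr)=1$, so $g^{-1}g'\in A_2(S)$, whence $g^{-1}g'\in Aut(S)$ and $g\,Aut(S)=g'\,Aut(S)$; for $i=2$ one argues identically using the first coordinate. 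Thus $\phi_i$ induces a bijection from $\{g\,Aut(S):g\in A_i(S)\}$ onto $\phi_i(A_i(S))$, so $[A_i(S):Aut(S)]=|\phi_i(A_i(S))|$, which is also the number of right cosets of $Aut(S)$ in $A_i(S)$.

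It remains to pin down $\phi_i(A_i(S))$. It is $Aut(S)$-invariant, because for $\psi\in Aut(S)$ one has $\psi g\in A_i(S)$ and $\phi_i(\psi g)=\psi\bigl(\phi_i(g)\bigr)$; it contains $1$; and $L$ is likewise $Aut(S)$-invariant, since $L_{\psi(a)}=\psi L_a\psi^{-1}$, and contains $1$. The hypothesis that $Aut(S)$ is transitive means it is transitive on $S\setminus\{1\}$, equivalently that every $Aut(S)$-invariant subset of $S$ containing $1$ is $\{1\}$ or $S$; applying this to $L$ gives $L=\{1\}$ or $L=S$. For $i=1$: either $\phi_1(A_1(S))=\{1\}$, which by the first paragraph means $A_1(S)=Aut(S)$, or $\phi_1(A_1(S))=S$ and the right cosets of $Aut(S)$ in $A_1(S)$ biject with $S$. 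For $i=2$: since $\phi_2(A_2(S))\subseteq L$, either $\phi_2(A_2(S))=\{1\}$, i.e. $A_2(S)=Aut(S)$, or $\phi_2(A_2(S))$ contains a non-identity element and therefore, being $Aut(S)$-invariant, equals $S$, forcing $L=S$ and putting the right cosets of $Aut(S)$ in $A_2(S)$ in bijection with the left non-singular elements of $S$. This is precisely the stated alternative for $i=1,2$.

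The argument is essentially formal: the entire content lies in reading the two companion maps correctly off Proposition~\ref{3p9} and noting that their images are $Aut(S)$-invariant subsets of $S$ containing $1$. The points demanding attention are the bookkeeping of which coordinate of a triple is evaluated at $1$ in the two cases, the remark that conjugation by an automorphism preserves left non-singularity, and the single genuinely global input, namely the reformulation of the transitivity hypothesis used once at the end. This last reduction is the least mechanical step, but it is the standard fact for loops with transitive automorphism group and its proof carries over verbatim to right loops.
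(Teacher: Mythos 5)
Your proof is correct and follows essentially the same route as the paper, which does not write the argument out but simply asserts that Drisko's Lemmas 3, 6, 8 and Corollary 7 carry over to right loops; your companion maps $\phi_1,\phi_2$, the observation that their images are $Aut(S)$-invariant subsets containing $1$, and the final appeal to transitivity on $S\setminus\{1\}$ are precisely that adaptation, made explicit. The only cosmetic point is that you count left cosets $g\,Aut(S)$ where the statement says right cosets, but as you note the index is the same, so nothing is lost.
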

\begin{prop}\label{3p11} Let $S$ be a right loop with transitive automorphism group. Then for $i=1,2$ either $\mathcal{U}(S)=A_i(S)$ or the right cosets of $A_2(S)$ in $\mathcal{U}(S)$ are in one-to-one correspondence with the elements of $S$ and the right cosets of $A_1(S)$ in $\mathcal{U}(S)$ are in one-to-one correspondence with the left non-singular elements of $S$.
\end{prop}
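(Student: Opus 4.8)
The plan is to run the argument of \cite[Corollary 9, p.~220]{dsk} in the right-loop setting, using Proposition \ref{3p9} and Proposition \ref{3p10} together with the right-loop analogues of \cite[Lemma 3, p.~217]{dsk}, \cite[Lemma 6, p.~219]{dsk} and \cite[Lemma 8, p.~219]{dsk} (which, as remarked above, hold by the same proofs once Proposition \ref{3p2} and Proposition \ref{3p9} are in hand). Recall $\mathcal{U}(S)$ is a group under componentwise composition, $(\alpha,\beta,\gamma)(\alpha^{\prime},\beta^{\prime},\gamma^{\prime})=(\alpha\alpha^{\prime},\beta\beta^{\prime},\gamma\gamma^{\prime})$, containing $Aut(S)$ diagonally. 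The engine is, for each $i\in\{1,2\}$, a map $\phi_i\colon\mathcal{U}(S)\to S$ whose fibres are precisely the right cosets of $A_i(S)$ in $\mathcal{U}(S)$: put $\phi_1(\alpha,\beta,\gamma)=\alpha^{-1}(1)$ and $\phi_2(\alpha,\beta,\gamma)=\beta^{-1}(1)$. That $\phi_i(g)=\phi_i(g^{\prime})$ if and only if $A_i(S)g=A_i(S)g^{\prime}$ is immediate from the composition law and the definitions of $A_1(S)$ and $A_2(S)$, so $\phi_i$ induces an injection of $A_i(S)\backslash\mathcal{U}(S)$ into $S$ with image $\phi_i(\mathcal{U}(S))$.

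First I would show each $\phi_i(\mathcal{U}(S))$ is a union of $Aut(S)$-orbits on $S$: right-translating an autotopy $g$ by the diagonal image $(\theta,\theta,\theta)$ of $\theta\in Aut(S)$ turns $\phi_i(g)$ into $\theta^{-1}(\phi_i(g))$, and by hypothesis $Aut(S)$ fixes $1$ and is transitive on $S\setminus\{1\}$; since the identity autotopy lies over $1$, each $\phi_i(\mathcal{U}(S))$ is either $\{1\}$ or all of $S$. For $i=1$ there is a further restriction. Setting $x=1$ in the autotopy relation $\alpha(x)\circ\beta(y)=\gamma(x\circ y)$ gives $\gamma=L_{\alpha(1)}^{\circ}\beta$, whence $L_{\alpha(1)}^{\circ}$ is bijective, i.e.\ $\alpha(1)$ is left non-singular; applying this to the inverse autotopy $(\alpha^{-1},\beta^{-1},\gamma^{-1})$ shows $\alpha^{-1}(1)$ is left non-singular too. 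Hence $\phi_1(\mathcal{U}(S))$ lies in the set $L$ of left non-singular elements of $S$, so by the previous paragraph it is either $\{1\}$ or all of $L$ (and in the latter case $L=S$, consistent with Corollary \ref{3p2c}).

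It remains to read off the statement. If $\phi_2(\mathcal{U}(S))=\{1\}$ then every autotopy $(\alpha,\beta,\gamma)$ has $\beta(1)=1$, i.e.\ $\mathcal{U}(S)=A_2(S)$; otherwise $\phi_2$ is onto and its fibres put the right cosets of $A_2(S)$ in $\mathcal{U}(S)$ in bijection with $S$. Similarly $\phi_1(\mathcal{U}(S))=\{1\}$ gives $\mathcal{U}(S)=A_1(S)$, and otherwise the right cosets of $A_1(S)$ in $\mathcal{U}(S)$ are put in bijection with the left non-singular elements of $S$. Finally one must check that the two ``trivial'' alternatives occur together (equivalently, that they both amount to $\mathcal{U}(S)=A_1(S)\cap A_2(S)=Aut(S)$), so that the two cases fuse into the single dichotomy displayed in the proposition; this is exactly the content of the right-loop analogue of \cite[Lemma 8, p.~219]{dsk}, which (through Proposition \ref{3p10}) supplies bijections $A_1(S)\backslash\mathcal{U}(S)\leftrightarrow Aut(S)\backslash A_2(S)$ and $A_2(S)\backslash\mathcal{U}(S)\leftrightarrow Aut(S)\backslash A_1(S)$ coupling the coset spaces. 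I expect this coupling, together with the attendant left/right-coset bookkeeping, to be the only delicate point: the rest is the routine verification that the proofs of \cite[Lemma 3, p.~217]{dsk}, \cite[Lemma 6, p.~219]{dsk} and \cite[Lemma 8, p.~219]{dsk} use nothing about loops beyond the facts now furnished for right loops by Proposition \ref{3p2} and Proposition \ref{3p9}.
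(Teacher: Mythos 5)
Your core mechanism is sound, and it is exactly the engine the paper relies on: the paper offers no argument of its own here beyond asserting that Drisko's Lemmas 3, 6, 8 and Corollary 9 of \cite{dsk} survive the passage from loops to right loops once Propositions \ref{3p2} and \ref{3p9} are in hand. Your fibre maps $\phi_1(\alpha,\beta,\gamma)=\alpha^{-1}(1)$ and $\phi_2(\alpha,\beta,\gamma)=\beta^{-1}(1)$, the verification that their fibres are the right cosets of $A_1(S)$ and $A_2(S)$, the orbit argument under right translation by the diagonal copy of $Aut(S)$, and the observation (setting $x=1$ in the autotopy identity and passing to the inverse autotopy) that $\phi_1$ takes values in the set $L$ of left non-singular elements, are all correct and reconstruct that machinery faithfully.

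The one misstep is the closing claim that ``the two trivial alternatives occur together.'' That is neither true for right loops nor what is needed. Since $L_{\theta(a)}=\theta L_a^{\circ}\theta^{-1}$ for $\theta\in Aut(S)$, the set $L$ is $Aut(S)$-invariant, so transitivity forces $|L|\in\{1,|S|\}$. When $|L|=1$, your own second paragraph already gives $\phi_1(\mathcal{U}(S))\subseteq L=\{1\}$, i.e.\ $\mathcal{U}(S)=A_1(S)$ unconditionally, while nothing prevents $[\mathcal{U}(S):A_2(S)]=|S|$; this is precisely the configuration contemplated in Case 4 of Theorem \ref{3p12}, so the two trivial alternatives can genuinely come apart. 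The conjunction in the second alternative of the proposition is nevertheless satisfied there, because $[\mathcal{U}(S):A_1(S)]=1=|L|$. When $|L|=|S|$, every element is left non-singular, so $S$ is a loop and the coupling you want to extract from the analogue of \cite[Lemma 8, p.~219]{dsk} is available verbatim from the loop case. Replacing your final paragraph with this two-case split (driven by $|L|\in\{1,|S|\}$) closes the argument; everything before it stands.
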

Now, we have
\begin{thm}\label{3p12} Any two isotopic right loops with transitive automorphism groups are isomorphic.
\end{thm}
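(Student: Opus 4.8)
The plan is to reduce everything to principal isotopes and then to convert ``isomorphic'' into the existence of an autotopy with prescribed values at the identity. Let $S_1,S_2$ be isotopic right loops with transitive automorphism groups and put $S=S_1$. By \cite[Lemma 1A, p.248]{rhb}, recalled at the start of this section, $S_2$ is isomorphic to a principal isotope $S_{a,b}$ of $S$ with $a$ left non-singular, and since $S_2$ has a transitive automorphism group, so does $S_{a,b}$. Hence it suffices to prove: if a right loop $S$ and a principal isotope $S_{a,b}$ (with $a$ left non-singular) both have transitive automorphism groups, then $S_{a,b}\cong S$.

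The first step is the reformulation: $S_{a,b}\cong S$ if and only if there is an autotopy $(\phi,\psi,\gamma)\in\mathcal{U}(S)$ with $\phi(1)=a$ and $\psi(1)=b$. Indeed, for any autotopy $(\phi,\psi,\gamma)$, putting $y=1$ and then $x=1$ in $\phi(x)\circ\psi(y)=\gamma(x\circ y)$ gives $\gamma=R_{\psi(1)}^{\circ}\phi=L_{\phi(1)}^{\circ}\psi$, so $\phi(1)$ is automatically left non-singular; if such a triple exists with $\phi(1)=a$ and $\psi(1)=b$ then $(R_b^{\circ})^{-1}\gamma=\phi$, $(L_a^{\circ})^{-1}\gamma=\psi$, and one checks that $\gamma$ is an isomorphism $S\to S_{a,b}$. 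Conversely, if $\theta$ is an isomorphism $S\to S_{a,b}$, then composing the isotopy $(\theta,\theta,\theta):S\to S_{a,b}$ with the canonical isotopy $((R_b^{\circ})^{-1},(L_a^{\circ})^{-1},I):S_{a,b}\to S$ gives the autotopy $((R_b^{\circ})^{-1}\theta,(L_a^{\circ})^{-1}\theta,\theta)$ of $S$, whose first two components send $1$ to $a$ and to $b$ since $\theta$ carries the identity $1$ of $S$ to the identity $a\circ b$ of $S_{a,b}$.

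The hypothesis enters through the remark that the set of left non-singular elements of $S$ is $Aut(S)$-invariant (because $L_{\sigma(x)}^{\circ}=\sigma L_x^{\circ}\sigma^{-1}$) and contains $1$; since $Aut(S)$ is transitive on $S\setminus\{1\}$, this set is either $\{1\}$ or all of $S$, so either $a=1$ or $S$ — hence $S_{a,b}$ too, by Corollary \ref{3p2c} — is a loop. In either case I would build the required autotopy by composition: take $u_1=(\phi_1,\psi_1,\gamma_1)\in A_2(S)$ with $\phi_1(1)=a$ (for $a=1$ the identity autotopy; for $a\neq1$ via Proposition \ref{3p10}, under which the right cosets of $Aut(S)$ in $A_2(S)$ correspond to the left non-singular elements) and $u_2=(\phi_2,\psi_2,\gamma_2)\in A_1(S)$ with $\psi_2(1)=\psi_1^{-1}(b)$ (for $b=1$ the identity, using $\psi_1(1)=1$; for $b\neq1$ via Proposition \ref{3p10}, under which the right cosets of $Aut(S)$ in $A_1(S)$ correspond to all of $S$). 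Since $\phi_2(1)=1$, the product $u_1u_2$ has first component taking $1$ to $\phi_1(\phi_2(1))=a$ and second component taking $1$ to $\psi_1(\psi_2(1))=b$, as needed. (Alternatively one counts: Propositions \ref{3p10}, \ref{3p11} and the dichotomy make $|\mathcal{U}(S)|$ large enough that $(\phi,\psi,\gamma)\mapsto(\phi(1),\psi(1))$, whose nonempty fibres are cosets of $A_1(S)\cap A_2(S)=Aut(S)$, is onto all admissible pairs.)

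The main obstacle is the degenerate alternatives allowed in Propositions \ref{3p10} and \ref{3p11} — $A_1(S)=Aut(S)$, $A_2(S)=Aut(S)$, $\mathcal{U}(S)=A_1(S)$, or $\mathcal{U}(S)=A_2(S)$ — in which the construction fails and $S$ can have principal isotopes not isomorphic to it. Here the so-far-unused hypothesis that $S_2$, equivalently $S_{a,b}$, also has a transitive automorphism group must be brought in: applying the same dichotomy to $S_{a,b}$, and using that the number of left non-singular elements is an isotopy invariant (Proposition \ref{3p2}), forces $S$ and $S_{a,b}$ to be simultaneously loops or to have exactly one left non-singular element (so $a=1$); then, using the isomorphisms $S_{a,b}\cong S_{\sigma(a),\sigma(b)}$ for $\sigma\in Aut(S)$ together with transitivity to normalize $b$, and Proposition \ref{3p9} to describe the autotopies of $S$ explicitly in each remaining configuration, one checks that these cases cannot occur unless $S_{a,b}=S$. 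Carrying out this last, genuinely degenerate, case analysis is the part I expect to demand the most care.
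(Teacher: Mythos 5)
Your proposal takes essentially the same route as the paper's proof: reduce to a principal isotope $S_{a,b}$ with $a$ left non-singular via Bruck's Lemma 1A, translate ``$S_{a,b}\cong S$'' into the existence of an autotopy $(\phi,\psi,\gamma)$ with $\phi(1)=a$, $\psi(1)=b$, and then settle existence using the coset structure of $\mathcal{U}(S)$ relative to $A_1(S)$, $A_2(S)$ and $Aut(S)$ from Propositions \ref{3p10} and \ref{3p11}, exactly the Drisko-style case analysis the paper invokes. The degenerate alternatives that you flag but do not fully execute are precisely the six cases the paper itself disposes of only by reference to the corresponding cases of Drisko's Theorem 10, so your writeup matches (and in the generic case exceeds) the paper's own level of detail; your observation that transitivity forces the left non-singular set to be $\{1\}$ or all of $S$ is a correct and useful addition not made explicit in the paper.
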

\begin{proof} Let $(S, \circ)$ be a right loop with transitive automorphism group. Then as remarked in the paragraph $2$ of the Section $3$, it is enough to prove that, if $a \in S$ is a left non-singular element, $b \in S$ and if $S_{a,b}$ has the transitive automorphism group, then $S \cong S_{a,b}$. So fix $a,b \in S$, where $a$ is a left non-singular element of $S$. Let $|S|=n$ and $m$ be the number of left non-singular elements in $S$. In view of  Proposition \ref{3p10} and \ref{3p11}, we need to consider the following six cases: 

\vspace{0.2 cm}
\noindent \textbf{Case 1.} $[\mathcal{U}(S):A_1(S)]=1=[\mathcal{U}(S):A_2(S)]$, \\ \textbf{Case 2.} $[\mathcal{U}(S):A_1(S)]=m, [\mathcal{U}(S):A_2(S)]=n$,$[A_1:Aut(S)]=1=[A_2:Aut(S)]$, \\ \textbf{Case 3.} $[\mathcal{U}(S):A_1(S)]=m, [\mathcal{U}(S):A_2(S)]=n$, $[A_1(S):Aut(S)]=n, [A_2(S):Aut(S)]=m$, \\ \textbf{Case 4.} $[\mathcal{U}(S):A_1(S)]=1, [\mathcal{U}(S):A_2(S)]=n$, $[A_1(S):Aut(S)]=n, [A_2(S):Aut(S)]=1$, \\ \textbf{Case 5.} $[\mathcal{U}(S):A_1(S)]=m, [\mathcal{U}(S):A_2(S)]=1$, $[A_1(S):Aut(S)]=1, [A_2(S):Aut(S)]=m$ and \\ \textbf{Case 6.} $[\mathcal{U}(S):A_1(S)]=m, [\mathcal{U}(S):A_2(S)]=1$, $[A_1(S):Aut(S)]=1, [A_2(S):Aut(S)]=m$. 
\vspace{0.2 cm}

In each case, the proof is similar to the proof of the corresponding case of \cite[Theorem 10, p. 220]{dsk}.
\end{proof}

Let us now conclude the section by posing some questions:
\begin{quest}
Let $G$ be a finite group and $H$ be a subgroup of $G$. Does $|\mathcal{I}{tp}(G,H)|=1 \Rightarrow H \trianglelefteq G$? 
\end{quest}
\begin{quest}
What are the pairs $(G,H)$, where $G$ is a group and $H$ a subgroup of $G$ for which $|\mathcal{I}{tp}(G,H)|=|\mathcal{I}(G,H)|$, where $|\mathcal{I}(G,H)|$ denotes the isomorphism classes in $\mathcal{T}(G,H)$?
\end{quest}
\begin{quest}
What are the pairs $(G,H)$, where $G$ is a group and $H$ a subgroup of $G$ such that whenever two NRTs in $\mathcal{T}(G,H)$ are isotopic, they are isomophic?   
\end{quest}
By Proposition \ref{3p12}, we have one answer to the question $3.19$ that is the pair $(G,H)$ such that each $S \in \mathcal{T}(G,H)$ has transitive automorphism group.

\section{Left non-singular elements in Transversals}
The aim of this section is to describe the number of isotopy classes of transversals of a subgroup of order $2$ in $D_{2p}$, the dihedral group of order $2p$, where $p$ is an odd prime.
  
Let $U$ be a group. Let $e$ denote the identity of the group $U$. Let $B \subseteq U \setminus \{e\}$ and $\varphi \in Sym(U)$ such that $\varphi(e)=e$. Define an operation $\circ$ on the set $U$ as 
\begin{equation}
x \circ y = \left\{
\begin{array}{l l}
xy & \qquad \mbox{if $y \notin B$}\\
y\varphi(x) & \qquad \text{if $y \in B$}\\
\end{array} \right. 
\end{equation}
It can be checked that $(U,\circ)$ is a right loop. Let us denote this right loop as $U^B_{\varphi}$. If $B=\emptyset$, then $U^B_{\varphi}$ is the group $U$ itself. If $\varphi$ is fixed, then we will drop the subscript $\varphi$.
Let $\mathbb{Z}_n$ denote the cyclic group of order $n$. Define a map $\varphi: \mathbb{Z}_n\rightarrow \mathbb{Z}_n$ by $\varphi(i)=-i$, where $i \in \mathbb{Z}_n$. Note that $\varphi$ is a bijection on $\mathbb{Z}_n$. Let $\emptyset \neq B \subseteq \mathbb{Z}_n \setminus \{0\}$. We denote 
${\mathbb{Z}_n}^{B}_{,\varphi}$ by ${\mathbb{Z}_n}^{B}$. Following lemma describes
left non-singuar elements in the right loop ${\mathbb{Z}_n}^{B}$.

\begin{lemma}\label{4l1} Let $i \in \mathbb{Z}_n \setminus \{0\}$ ($n$ odd) and $\emptyset \neq B \subseteq \mathbb{Z}_n \setminus \{0\}$. Then $i$ is not a left non-singular in $\mathbb{Z}_n^B$ if and only if the equation $X-Y \equiv i (mod \ n)$ has a solution in $B \times B^{\prime}$, where $X$ and $Y$ are unknowns and $B^{\prime}=\mathbb{Z}_n\setminus B$.
\end{lemma}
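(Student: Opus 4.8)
The plan is to analyse directly the left translation map $L_i^{\circ}\colon \mathbb{Z}_n^B\to \mathbb{Z}_n^B$, $X\mapsto i\circ X$, since by definition $i$ is left non-singular exactly when $L_i^{\circ}$ is a bijection, which — as $\mathbb{Z}_n$ is finite — is the same as $L_i^{\circ}$ being injective. First I would read off from the defining formula of $\circ$ that $L_i^{\circ}$ is the translation $X\mapsto X+i$ on $B'=\mathbb{Z}_n\setminus B$ and the map $X\mapsto X+\varphi(i)=X-i$ on $B$. In particular $L_i^{\circ}$ is already injective on each of the two blocks $B$ and $B'$ separately, so it fails to be injective if and only if its two partial images $B'+i$ and $B-i$ meet, i.e.\ if and only if there are $b\in B$ and $b'\in B'$ with $b-i=b'+i$, equivalently $b-b'\equiv 2i\pmod n$. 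This gives the intermediate statement: $i$ is \emph{not} left non-singular in $\mathbb{Z}_n^B$ if and only if $2i\in B-B'$, where $B-B'=\{\,b-b' : b\in B,\ b'\in B'\,\}$.

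It then remains to replace $2i$ by $i$, and this is where oddness of $n$ is used. The key point is that $B-B'$ is the complement in $\mathbb{Z}_n$ of the stabiliser $K=\{\,d\in\mathbb{Z}_n : B+d=B\,\}$: for any $d$ one has $d\notin B-B'$ iff no $b\in B$ satisfies $b-d\in B'$, iff $B-d\subseteq B$, iff $B-d=B$ (the two sets having the same cardinality), iff $d\in K$; and $K$ is plainly a subgroup of $\mathbb{Z}_n$. Since $n$ is odd, multiplication by $2$ is a bijection of $\mathbb{Z}_n$, so $2K\subseteq K$ (because $K$ is closed under addition) forces $2K=K$, and hence $2d\in K\iff d\in K$ for every $d$. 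Combining, $i\in B-B'\iff i\notin K\iff 2i\notin K\iff 2i\in B-B'$, which together with the previous paragraph yields exactly the asserted equivalence. (The hypothesis $i\neq 0$ plays no real role beyond discarding the identity element $0$, which is always left non-singular.)

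I do not anticipate a genuine obstacle, only two points needing care: the sign bookkeeping forced by $\varphi(i)=-i$ in the two branches of $\circ$, and the structural observation that the ``bad set'' $B-B'$ is the complement of a subgroup and is therefore invariant under doubling precisely because $n$ is odd. A slightly different but equivalent way to organise the first paragraph is to solve $i\circ X=c$ by cases — the candidate solutions are $X=c-i$ (valid iff $c-i\in B'$) and $X=c+i$ (valid iff $c+i\in B$) — and to require that for every $c$ exactly one of these occurs; this condition again collapses to $B+2i=B$, i.e.\ $2i\in K$, so one lands at the same place.
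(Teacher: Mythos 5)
Your proposal is correct and follows essentially the same route as the paper: both reduce the failure of left non-singularity of $i$ to the condition that $x-y\equiv 2i \pmod n$ has a solution in $B\times B'$, and then use oddness of $n$ to replace $2i$ by $i$. The one substantive difference is at that last step: the paper justifies it only by remarking that $j\mapsto 2j$ is a bijection of $\mathbb{Z}_n$, which by itself shows merely that the sets $\{\,i : 2i\in B-B'\,\}$ and $B-B'$ have the same cardinality, not that they coincide; your observation that $B-B'$ is the complement of the translation stabiliser $K=\{\,d : B+d=B\,\}$, a subgroup and hence closed under the bijection $d\mapsto 2d$, supplies the missing argument and is a genuine improvement in rigour.
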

\begin{proof} Let $\circ$ denote the binary operation of $\mathbb{Z}_n^B$. Let $i \in {\mathbb{Z}_n}^B$ such that $i$ is not a left non-singular element. Then for some $x, y \in {\mathbb{Z}_n}^B$ such that $x \neq y$, $i \circ x= i \circ y$. We note that if $x,y \in B$ or $x,y \in B^{\prime}$, then $i \circ x = i \circ y \Rightarrow x=y$. Therefore, we can assume that $x \in B$ and $y \in B^{\prime}$. This means that $x-y\equiv 2i (mod \ n)$. Since $j \mapsto 2j$ ($j \in \mathbb{Z}_n$) is a bijection on $\mathbb{Z}_n$, $x-y \equiv i (mod \ n)$. Thus $X-Y \equiv i (mod \ n)$ has a solution in $B \times B^{\prime}$.

\vspace{0.2 cm}
\noindent Conversely, assume that $X-Y \equiv i (mod \ n)$ has a solution in $B \times B^{\prime}$. Which equivalently implies that, $X-Y \equiv 2i (mod \ n)$ has a solution in $B \times B^{\prime}$. This means that there exists $(x,y)\in B \times B^{\prime}$ such that $i \circ x = i \circ y$. Thus, $i$ is not a left non-singular element in ${\mathbb{Z}_n}^B$.   
\end{proof}
\begin{prop}\label{4p1} Let $n\in\mathbb{N}$ be odd. Then $i \in \mathbb{Z}_n^B$ is a left non-singular if and only if $B$ and $B^{\prime}$ are unions of cosets of the subgroup $\langle i \rangle$ of the group $\mathbb{Z}_n$. In particular, $i \notin B$.
\end{prop}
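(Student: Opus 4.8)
The plan is to leverage Lemma \ref{4l1}, which already tells us that $i \in \mathbb{Z}_n^B$ is left non-singular if and only if the equation $X - Y \equiv i \pmod n$ has no solution with $X \in B$ and $Y \in B' = \mathbb{Z}_n \setminus B$. So I would first restate the condition to be proved as: this non-solvability condition is equivalent to $B$ (equivalently $B'$) being a union of cosets of the cyclic subgroup $\langle i \rangle \leq \mathbb{Z}_n$.

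For the forward direction, I would argue by contrapositive: suppose $B$ is \emph{not} a union of $\langle i \rangle$-cosets. Then some coset $c + \langle i \rangle$ meets both $B$ and $B'$. Walking around this coset in steps of $i$ (i.e. through $c, c+i, c+2i, \dots$), since the set of multiples of $i$ visited eventually cycles through the whole coset and the coset is partitioned nontrivially between $B$ and $B'$, there must be two \emph{consecutive} elements $c + ki$ and $c + (k+1)i$ with $c + (k+1)i \in B$ and $c + ki \in B'$ — more carefully, one picks an index where the membership switches from $B'$ to $B$ as we increment by $i$. Setting $X = c+(k+1)i \in B$ and $Y = c + ki \in B'$ gives $X - Y \equiv i \pmod n$, a solution, so by Lemma \ref{4l1} $i$ is not left non-singular. (One must make sure such a "switch" index exists: since the coset is finite and cyclic under $+i$, if it contains both a $B$-element and a $B'$-element then somewhere along the cycle a $B'$-element is immediately followed by a $B$-element.) Conversely, if $B$ \emph{is} a union of $\langle i\rangle$-cosets, then so is $B'$, and for any $Y \in B'$ we have $Y + i$ in the same $\langle i \rangle$-coset as $Y$, hence $Y + i \in B'$ as well, so $Y+i \notin B$; thus $X - Y \equiv i$ has no solution in $B \times B'$, and Lemma \ref{4l1} gives that $i$ is left non-singular.

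Finally, for the "in particular" clause: if $i$ is left non-singular then $B$ is a union of $\langle i \rangle$-cosets; since $i \neq 0$, the subgroup $\langle i \rangle$ contains $0$, so the coset $0 + \langle i \rangle = \langle i \rangle$ is either entirely inside $B$ or entirely inside $B'$. But $0 \notin B$ by hypothesis ($B \subseteq \mathbb{Z}_n \setminus \{0\}$), so $\langle i \rangle \subseteq B'$, and in particular $i \in \langle i \rangle \subseteq B'$, i.e. $i \notin B$.

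I expect the only mildly delicate point to be the "switch index" argument in the forward direction — making precise that a finite set acted on cyclically by translation-by-$i$, if split nontrivially, must have adjacent elements of opposite type. This is elementary (it is just the observation that on a cycle coloured with two colours and using both, there is a place where the colour changes), but it is the one step that needs a sentence of care rather than a one-line invocation; everything else is a direct translation through Lemma \ref{4l1}.
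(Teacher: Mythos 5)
Your proposal is correct and follows essentially the same route as the paper: both directions reduce to Lemma \ref{4l1}, i.e.\ to the observation that $i$ is left non-singular exactly when $B^{\prime}$ is closed under translation by $i$, which is the same as $B^{\prime}$ (hence $B$) being a union of $\langle i\rangle$-cosets. The only cosmetic difference is that the paper proves the forward direction directly (closure of $B^{\prime}$ under $+i$ immediately gives $k+\langle i\rangle\subseteq B^{\prime}$ for each $k\in B^{\prime}$), whereas you argue by contrapositive via the ``switch index'' on a coset meeting both $B$ and $B^{\prime}$; both are sound.
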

\begin{proof} Assume that $i \in \mathbb{Z}_n \setminus \{0\}$ is a left non-singular element. By Lemma \ref{4l1}, for no $k \in B^{\prime}$, $i+k \in B$. This means that $B^{\prime}=\{i+k|k \in B^{\prime}\}$. This implies that $\langle i \rangle \subseteq B^{\prime}$. Therefore, $B^{\prime}=\cup_{k \in B^{\prime}}(k+\langle i \rangle)$. As $B \cap B^{\prime}=\emptyset$, $B=\cup_{k \in B}(k+\langle i \rangle)$.

\vspace{0.2 cm}
\noindent For the converse, we observe that $B^{\prime}=\cup_{k \in B^{\prime}}(k+\langle i \rangle)$ implies that for each $k \in B^{\prime}$, $i+k \notin B$. Thus by Lemma \ref{4l1}, $i \in \mathbb{Z}_n \setminus \{0\}$ is a left non-singular element. 
\end{proof}
\begin{cor}\label{4p1c1} If $n$ is an odd prime and $\emptyset \neq B \subseteq \mathbb{Z}_n \setminus \{0\}$, then $0 \in \mathbb{Z}_n^B$ is the only left non-singular element. 
\end{cor}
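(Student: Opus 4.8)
The plan is to obtain this as an immediate specialization of Proposition \ref{4p1} to the case $n$ prime. First I would recall the relevant fact about $\mathbb{Z}_n$: when $n$ is prime, the only subgroups of $\mathbb{Z}_n$ are $\{0\}$ and $\mathbb{Z}_n$ itself. Hence for every $i \in \mathbb{Z}_n \setminus \{0\}$ we have $\langle i \rangle = \mathbb{Z}_n$, and therefore the only subsets of $\mathbb{Z}_n$ that are unions of cosets of $\langle i \rangle$ are $\emptyset$ and $\mathbb{Z}_n$.

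Next I would use the hypothesis on $B$. By assumption $B \neq \emptyset$ and $B \subseteq \mathbb{Z}_n \setminus \{0\}$, so in particular $0 \notin B$, whence $B \neq \mathbb{Z}_n$. Thus $B$ is neither $\emptyset$ nor all of $\mathbb{Z}_n$, so by the previous paragraph $B$ fails to be a union of cosets of $\langle i \rangle$ for every $i \neq 0$. Proposition \ref{4p1} then says that no such $i$ can be left non-singular in $\mathbb{Z}_n^B$.

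Finally, I would observe that $0$ is the identity of the right loop $\mathbb{Z}_n^B$, so $L_0^{\circ}$ is the identity permutation of $\mathbb{Z}_n$; in particular $0 \circ X = c$ has the unique solution $X = c$ for every $c \in \mathbb{Z}_n^B$, so $0$ is left non-singular. Combining the two observations yields that $0$ is the unique left non-singular element of $\mathbb{Z}_n^B$. I do not anticipate any real obstacle: the argument is a direct reading of Proposition \ref{4p1} in the prime case, and the only points requiring (minimal) care are the subgroup structure of $\mathbb{Z}_n$ for prime $n$ and the fact that the two-sided identity of a right loop is always left non-singular.
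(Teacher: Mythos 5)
Your argument is correct and is exactly the derivation the paper intends: the corollary is stated without proof as an immediate consequence of Proposition \ref{4p1}, and your observation that $\langle i\rangle=\mathbb{Z}_n$ for every $i\neq 0$ when $n$ is prime, so that $B$ (being nonempty and missing $0$) cannot be a union of cosets of $\langle i\rangle$, is precisely the intended specialization. The closing remark that $0$, as the identity, is always left non-singular is also right.
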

By the similar arguement above, we can record following proposition for even integer $n$. 
\begin{prop}\label{4p2} Let $i \in \mathbb{Z}_n \setminus \{0\}$ ($n$ even) and $\emptyset \neq B \subseteq \mathbb{Z}_n \setminus \{0\}$. Then $i \in \mathbb{Z}_n^B$ is a left non-singular if and only if $B$ and $B^{\prime}$ are unions of cosets of the subgroup $\langle 2i \rangle$ of the group $\mathbb{Z}_n$. In particular, $2i \notin B$.
\end{prop}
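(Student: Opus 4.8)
The plan is to run the arguments of Lemma~\ref{4l1} and Proposition~\ref{4p1} again, but to stop one step earlier. For odd $n$ one uses that $j\mapsto 2j$ is a bijection of $\mathbb{Z}_n$ to replace the congruence $x-y\equiv 2i$ by $x-y\equiv i$, and this is precisely the step that is unavailable when $n$ is even; so the factor $2$ must be carried along and the subgroup that appears becomes $\langle 2i\rangle$ instead of $\langle i\rangle$.

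First I would prove the even analogue of Lemma~\ref{4l1}: writing $\circ$ for the operation of $\mathbb{Z}_n^B$, the element $i\in\mathbb{Z}_n\setminus\{0\}$ fails to be left non-singular if and only if $X-Y\equiv 2i\ (\mathrm{mod}\ n)$ has a solution in $B\times B'$. Indeed, if $i\circ x=i\circ y$ with $x\neq y$, then $x,y$ cannot both lie in $B$ nor both in $B'$, since on each of those sets the map $z\mapsto i\circ z$ is a translation (by $-i$ on $B$ and by $i$ on $B'$) and hence injective; so we may assume $x\in B$ and $y\in B'$, and then $x+\varphi(i)=i+y$, i.e.\ $x-y\equiv 2i\ (\mathrm{mod}\ n)$. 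Conversely any such pair $(x,y)\in B\times B'$ yields $i\circ x=i\circ y$. This is exactly the first half of the proof of Lemma~\ref{4l1}, the only difference being that one does not afterwards pass from $2i$ to $i$.

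Reading this contrapositively, $i$ is left non-singular iff $2i+k\notin B$ for every $k\in B'$, i.e.\ iff $B'$ is stable under the translation $z\mapsto z+2i$. Since $\mathbb{Z}_n$ is finite, such stability forces $k+\langle 2i\rangle\subseteq B'$ for each $k\in B'$, so $B'=\bigcup_{k\in B'}(k+\langle 2i\rangle)$ is a union of cosets of $\langle 2i\rangle$, and hence so is its complement $B=\mathbb{Z}_n\setminus B'$; the converse implication is immediate, since a union of cosets of $\langle 2i\rangle$ is $+2i$-stable. Finally $0\in B'$ because $0\notin B$ by hypothesis, so $\langle 2i\rangle\subseteq B'$ and in particular $2i\notin B$. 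I do not expect a genuine obstacle here beyond the bookkeeping just described; the one point worth a remark is the degenerate case $i=n/2$, where $2i=0$, the subgroup $\langle 2i\rangle$ is trivial, the coset condition is vacuous, and indeed $n/2$ is always left non-singular because $X-Y\equiv 0$ has no solution with $X\in B$, $Y\in B'$.
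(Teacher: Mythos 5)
Your proof is correct and is exactly the argument the paper intends: the paper gives no written proof of Proposition \ref{4p2}, merely remarking that it follows ``by the similar argument above,'' and what you have written is precisely that argument --- rerunning Lemma \ref{4l1} but stopping at the congruence $X-Y\equiv 2i$ (since the doubling map is no longer a bijection for even $n$), and then repeating the coset argument of Proposition \ref{4p1} with $\langle 2i\rangle$ in place of $\langle i\rangle$. Your remark on the degenerate case $i=n/2$ is a worthwhile addition that the paper does not make explicit.
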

Let $G=D_{2n}=\langle x,y|x^2=y^n=1, xyx=y^{-1} \rangle$ and $H=\{1,x\}$. Let $N=\langle y \rangle$. Let $\epsilon :N\rightarrow H$ be a function with $\epsilon (1)=1$. Then $T_{\epsilon}=\{\epsilon(y^i)y^i|1 \leq i \leq n \} \in \mathcal{T}(G,H)$ and all NRTs $T \in \mathcal{T}(G,H)$ are of this form. 
Let $B=\{i \in \mathbb{Z}_n|\epsilon(y^i)=x\}$. Since $\epsilon$ is completely determined by the 
subset $B$, we shall denote  $T_{\epsilon}$ by $T_B$.
Clearly, 
the map $\epsilon(y^i)y^i\mapsto i$ from $T_{\epsilon}$ to $\mathbb{Z}_n^B$
is an isomorphism of right loops. So we may identify the right loop $T_B$ with 
the right loop ${\mathbb{Z}_n}^{B}$ by means of the above isomorphism. From now onward,
we shall denote the binary operations of $T_B$ as well as of $\mathbb{Z}_n^B$ by $\circ_{\tiny B}$.
We observe that $T_{\emptyset}=N \cong \mathbb{Z}_n$. We obtain following corollaries of Proposition \ref{4p1} and \ref{4p2} respectively. 
\begin{cor}\label{4p1c2} Let $n$ be an odd integer. Then there is only one loop transversal in $\mathcal{T}(D_{2n},H)$.
\end{cor}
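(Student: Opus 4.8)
The plan is to invoke Proposition \ref{4p1}, which characterizes the left non-singular elements of $\mathbb{Z}_n^B$ (equivalently $T_B$) for odd $n$: an element $i \in \mathbb{Z}_n \setminus \{0\}$ is left non-singular if and only if both $B$ and $B' = \mathbb{Z}_n \setminus B$ are unions of cosets of $\langle i \rangle$. Recall from Corollary \ref{3p2c} and the preceding discussion that a right loop is a loop precisely when every element is left non-singular (the identity $0$ always is, since $0 \notin B$ forces $L_0 = \mathrm{id}$). So $T_B$ is a loop transversal if and only if every $i \in \mathbb{Z}_n \setminus \{0\}$ is left non-singular, i.e. if and only if for every nonzero $i$, both $B$ and $B'$ are unions of $\langle i \rangle$-cosets.

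The key step is to show this forces $B = \emptyset$. First I would take $i = 1$: the subgroup $\langle 1 \rangle = \mathbb{Z}_n$ has the single coset $\mathbb{Z}_n$ itself, so "$B$ is a union of cosets of $\langle 1 \rangle$" says $B \in \{\emptyset, \mathbb{Z}_n\}$. Since by hypothesis $0 \notin B$ (NRTs are normalized, and indeed Proposition \ref{4p1} records $i \notin B$ for left non-singular $i$ — in particular this would apply but here the cleaner route is just $B \subseteq \mathbb{Z}_n \setminus \{0\} \subsetneq \mathbb{Z}_n$), $B = \mathbb{Z}_n$ is impossible, hence $B = \emptyset$. Conversely, when $B = \emptyset$ the right loop $T_\emptyset = \mathbb{Z}_n$ is a group, hence a loop transversal. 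Therefore $T_\emptyset$ is the unique loop transversal in $\mathcal{T}(D_{2n}, H)$.

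There is essentially no obstacle here: the corollary is a one-line consequence of Proposition \ref{4p1} applied to the single generator $i = 1$. The only point requiring a word of care is the identification, already set up in the paragraph preceding the corollary, that $T_B \cong \mathbb{Z}_n^B$ as right loops, so that "loop transversal" for $T_B$ translates into "every element left non-singular" for $\mathbb{Z}_n^B$; and the elementary remark that a finite right loop in which every element is left non-singular is a loop (all left translations being bijections of a finite set). Neither deserves more than a sentence in the write-up.
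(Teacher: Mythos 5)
Your proof is correct and is exactly the argument the paper intends: the corollary is stated without a written proof as an immediate consequence of Proposition \ref{4p1}, and your deduction (apply the proposition with $i=1$, so $\langle 1\rangle=\mathbb{Z}_n$ forces $B\in\{\emptyset,\mathbb{Z}_n\}$, and $B\subseteq\mathbb{Z}_n\setminus\{0\}$ rules out $B=\mathbb{Z}_n$) is precisely that one-line consequence. The auxiliary observations you flag --- that $T_B\cong\mathbb{Z}_n^B$ and that a finite right loop all of whose elements are left non-singular is a loop --- are indeed the only points needing mention, and you handle them correctly.
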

\begin{cor}\label{4p2c1} Let $n$ be an even integer. Then there are only two loop transversals in $\mathcal{T}(D_{2n},H)$.
\end{cor}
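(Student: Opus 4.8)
The plan is to combine the description of left non-singular elements in Proposition~\ref{4p2} with the fact that a right loop is a loop precisely when \emph{every} one of its elements is left non-singular (the identity $0$ being automatically left non-singular, as $L_0$ is the identity map). So, identifying $T_B$ with $\mathbb{Z}_n^B$, the transversal $T_B$ is a loop transversal if and only if every $i\in\mathbb{Z}_n\setminus\{0\}$ is left non-singular in $\mathbb{Z}_n^B$.

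First I would extract the candidates by applying the necessity half of Proposition~\ref{4p2} with $i=1$: if $T_B$ is a loop transversal, then $B$ and $B'$ must be unions of cosets of $\langle 2\cdot 1\rangle=\langle 2\rangle$. Since $n$ is even, $\langle 2\rangle$ is the index-$2$ subgroup of even residues of $\mathbb{Z}_n$, so it has exactly two cosets, namely $\langle 2\rangle$ itself and the set of odd residues; and since $0\in\langle 2\rangle$ while $0\notin B$, the coset $\langle 2\rangle$ cannot meet $B$. Hence $B=\emptyset$ or $B$ equals precisely the set of odd residues of $\mathbb{Z}_n$. This already shows there are at most two loop transversals. (This is also where the even case diverges from the odd case of Corollary~\ref{4p1c2}: there the relevant subgroup is $\langle i\rangle$, and for $i=1$ one gets $\langle 1\rangle=\mathbb{Z}_n$, forcing $B=\emptyset$.)

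Next I would verify that both candidates actually give loop transversals. For $B=\emptyset$ this is immediate since $T_\emptyset=N\cong\mathbb{Z}_n$ is a group. For $B$ equal to the set of odd residues, fix an arbitrary $i\in\mathbb{Z}_n\setminus\{0\}$; because $n$ is even, $2i$ is an even residue, so $\langle 2i\rangle\subseteq\langle 2\rangle$, and therefore every coset of $\langle 2i\rangle$ lies inside a single coset of $\langle 2\rangle$, i.e.\ consists entirely of odd residues or entirely of even residues. Consequently both $B$ (the odd residues) and $B'$ (the even residues) are unions of cosets of $\langle 2i\rangle$, so by the sufficiency half of Proposition~\ref{4p2} the element $i$ is left non-singular; as $i$ was arbitrary, $T_B$ is a loop transversal. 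Combining the two halves gives exactly two loop transversals in $\mathcal{T}(D_{2n},H)$, and they are genuinely distinct since the set of odd residues of $\mathbb{Z}_n$ is nonempty for even $n$.

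The argument is short and essentially forced once Proposition~\ref{4p2} is available; the only point that needs a little care is the converse direction, where one must check the ``union of cosets'' condition simultaneously for \emph{all} nonzero $i$, rather than just for $i=1$. This is handled by the single observation $\langle 2i\rangle\subseteq\langle 2\rangle$, valid because $n$ is even, and I do not anticipate any real obstacle beyond this small verification.
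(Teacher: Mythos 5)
Your proof is correct and follows essentially the same route as the paper: both directions rest on Proposition~\ref{4p2}, using $i=1$ to force $B=\emptyset$ or $B=$ the odd residues, and the inclusion $\langle 2i\rangle\subseteq\langle 2\rangle$ to confirm that the odd-residue choice is indeed a loop transversal. If anything, your necessity step is slightly more complete than the paper's, which only explicitly rules out nonempty proper subsets of the odd residues, whereas you exclude every other $B$ in one stroke.
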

\begin{proof} Let $B \subseteq \mathbb{Z}_p\setminus \{0\}$. For $B=\emptyset$, $T_B \cong \mathbb{Z}_n$. Assume that $B \neq \emptyset$. Let $B=\{2i-1|i \in \mathbb{Z}_n\}$. In this case, $B^{\prime}=\langle 2 \rangle$ and $B=\langle 2 \rangle+1$ and  $2j \notin B$ for all $j \in \mathbb{Z}_n$. By Proposition \ref{4p2}, each $j \in \mathbb{Z}_n^B$ is left non-singular. In this case, $\mathbb{Z}_n^B \cong D_{2(n/2)}$. If $\emptyset \neq B \subsetneqq \{2i-1|i \in \mathbb{Z}_n\}$, then $1$ can not left non-singular element (otherwise $2 \in B^{\prime}$ and $B^{\prime}=\{2i|i \in \mathbb{Z}_n\}$).      
\end{proof}

Let $p$ be an odd prime. Choose $L \in \mathcal{T}(D_{2p},H)$, where $H$ is a subgroup of $D_{2p}$ of order $2$. Then $L=T_B$ for some $B \subseteq \mathbb{Z}_p \setminus \{0\}$.  
By Corollary \ref{4p1c1} and \cite[Theorem 1A, p.249]{rhb}, $((R_u^{\circ_{\tiny B}})^{-1},I,I)$ are the only principal isotopisms from the principal isotope $(L_{0,u},\circ_{\tiny u})$ to $(L, \circ_{\tiny B})$, where $u \in L$, $I$ is the identity map on $L$ and 
$x \circ_{\tiny u} y=(R_u^{\circ_{\tiny B}})^{-1}(x) \circ_{\tiny B} y$.
 Let $Aff(1,p)=\{f_{\mu,t}:\mathbb{Z}_p\rightarrow \mathbb{Z}_p|f_{\mu,t}(x)=\mu x+t, \rm{where}~\mu \in \mathbb{Z}_p \setminus \{0\}~ \rm{and} ~ t \in \mathbb{Z}_p\}$, \textit{the one dimensional affine group}. For $\emptyset \ne A \subseteq \mathbb{Z}_p \setminus \{0\}$, $\mu \in \mathbb{Z}_p \setminus \{0\}$ and $t \in \mathbb{Z}_p$, let $f_{\mu,t}(A)=\{\mu a+t|a \in A\}$. Let $A^{\prime}=\mathbb{Z}_p \setminus A$ and $\mathcal{X}_A=\{f_{\mu,u}(A)|u \notin A\} \cup \{(f_{\mu,u}(A))^{\prime}|u \in A\}$. If $A=\emptyset$, we define $\mathcal{X}_A=\{\emptyset\}$. We have following theorem:

\begin{thm} \label{4t1} Let $L =T_B\in \mathcal{T}(D_{2p},H)$. Then $S \in \mathcal{T}(D_{2p},H)$ is isotopic to $L$ if and only if $S=T_C$, for some $C \in \mathcal{X}_B$.
\end{thm}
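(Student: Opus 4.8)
The plan is to push the general isotopy theory recalled above down to the concrete right loops $\mathbb{Z}_p^{C}$, supplementing it with a classification of the isomorphisms between those loops. First dispose of the case $B=\emptyset$: then $L=T_\emptyset\cong\mathbb{Z}_p$ is a loop, so by Corollary \ref{3p2c} any isotopic $S\in\mathcal{T}(D_{2p},H)$ is a loop transversal, and by Corollary \ref{4p1c2} the only loop transversal is $T_\emptyset$; this matches $\mathcal{X}_\emptyset=\{\emptyset\}$. Assume now $B\ne\emptyset$. By Corollary \ref{4p1c1}, $0$ is the unique left non-singular element of $L=\mathbb{Z}_p^{B}$, so by \cite[Lemma 1A, p.248]{rhb} and \cite[Theorem 1A, p.249]{rhb} (exactly as noted just before the theorem) every principal isotope of $L$ is one of the loops $(L,\circ_{\tiny u})$, $u\in\mathbb{Z}_p$, and $S$ is isotopic to $L$ if and only if $S\cong(L,\circ_{\tiny u})$ for some $u$.

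The next step is to compute $(L,\circ_{\tiny u})$ explicitly. Its two-sided identity is $u$, so for each $\mu\in\mathbb{Z}_p\setminus\{0\}$ the map $z\mapsto\mu(z-u)$ is a bijection of $\mathbb{Z}_p$ carrying that identity to $0$; transporting $\circ_{\tiny u}$ along it and unravelling the two-case definition of $\circ_{\tiny B}$ (recalling $R_u^{\circ_{\tiny B}}(x)=x+u$ for $u\notin B$ and $R_u^{\circ_{\tiny B}}(x)=u-x$ for $u\in B$) gives $(L,\circ_{\tiny u})\cong\mathbb{Z}_p^{\mu(B-u)}$ when $u\notin B$ and $(L,\circ_{\tiny u})\cong\mathbb{Z}_p^{(\mu(B-u))'}$ when $u\in B$, where $D'=\mathbb{Z}_p\setminus D$. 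In each case the exponent set lies in $\mathbb{Z}_p\setminus\{0\}$, and as $(\mu,u)$ vary it runs over all affine images of $B$ (when $u\notin B$), resp.\ of $B'$ (when $u\in B$), that do not contain $0$.

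To read off which members of $\mathcal{T}(D_{2p},H)$ these are, I need the isomorphism classification: for nonempty $C,D\subseteq\mathbb{Z}_p\setminus\{0\}$, $\mathbb{Z}_p^{C}\cong\mathbb{Z}_p^{D}$ if and only if $D=\mu C$ for some $\mu\in\mathbb{Z}_p\setminus\{0\}$, and then the isomorphism is $z\mapsto\mu z$. The ``if'' is the transport computation just used. For the ``only if'', an isomorphism $\theta$ fixes the unique left non-singular element (Corollary \ref{4p1c1}), so $\theta(0)=0$, and it conjugates $\langle R_y^{\circ_{\tiny C}}\mid y\in\mathbb{Z}_p\rangle$ onto $\langle R_y^{\circ_{\tiny D}}\mid y\in\mathbb{Z}_p\rangle$; since $C,D\ne\emptyset$, both of these subgroups equal the order‑$2p$ group $M=\{z\mapsto\pm z+t\}\le Sym(\mathbb{Z}_p)$ (its translations together with the reflections $z\mapsto c-z$), so $\theta$ normalizes $M$, hence normalizes the translation subgroup (characteristic in $M$, being of index $2$), hence $\theta\in Aff(1,p)$; fixing $0$, $\theta(z)=\mu z$ and $D=\mu C$. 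Combining with the previous paragraph, $T_C$ is isotopic to $T_B$ precisely when $C$ is an affine image of $B$ or of $B'$ with $0\notin C$; a direct check identifies this collection of index subsets with $\mathcal{X}_B$ (the $u\notin B$ branch producing the affine images of $B$, the $u\in B$ branch, through complementation, those of $B'$), and running the transport maps backwards produces an explicit isotopy in the ``if'' direction, so both implications of Theorem \ref{4t1} follow.

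I expect the isomorphism‑classification step, together with the final matching with $\mathcal{X}_B$, to be the main point of friction. One must be certain no ``exotic'' isomorphism between the $\mathbb{Z}_p^{C}$ exists — this is where both the uniqueness of the left non-singular element and the fact that the normalizer in $Sym(\mathbb{Z}_p)$ of a regular cyclic subgroup is $Aff(1,p)$ are essential — and then track carefully how the translation part of an affine map interacts with the conditions $u\notin B$, $u\in B$ and with complementation, since those translations are exactly what make the isotopy class $\mathcal{X}_B$ strictly larger than the isomorphism class of $T_B$.
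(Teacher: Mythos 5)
Your argument is correct and follows the paper's overall architecture: dispose of $B=\emptyset$ via the uniqueness of the loop transversal, use Corollary \ref{4p1c1} to see that the only principal isotopes of $L$ are the $L_{0,u}$, compute these explicitly and transport them along affine bijections to get the ``if'' direction, and for the converse factor the isotopy (Bruck's Lemma 1A) into an isomorphism followed by a principal isotopy. Where you genuinely diverge is in the key step of the converse, namely showing that the connecting isomorphism is affine. The paper does this by direct computation: from $R_y^{\circ_{C}}=\gamma^{-1}R_{\gamma(y)}^{\circ_{u}}\gamma$ and the fact that conjugate permutations have equal order it extracts the functional equations $\gamma(x+y)=\gamma(x)+\gamma(y)-u$, resp.\ $\gamma(y-x)=\gamma(y)-\gamma(x)+u$, and solves them by induction to obtain $\gamma(x)=(\gamma(1)-\gamma(0))x+u$. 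You instead isolate a standalone classification lemma ($\mathbb{Z}_p^{C}\cong\mathbb{Z}_p^{D}$ for nonempty $C,D$ iff $D=\mu C$, realized by $z\mapsto\mu z$), proved group-theoretically: the right multiplication group of any $\mathbb{Z}_p^{C}$ with $C\neq\emptyset$ is the dihedral group $M$ of order $2p$, an isomorphism normalizes $M$ and hence its translation subgroup, hence lies in $Aff(1,p)$ because the normalizer in $Sym(\mathbb{Z}_p)$ of a regular cyclic subgroup of order $p$ is the affine group, and fixing the unique left non-singular element $0$ forces linearity. Both proofs rest on the same observation --- translations must be conjugated to translations --- but yours buys a reusable statement (the isomorphism classes of the loops $\mathbb{Z}_p^{C}$) at the price of importing the normalizer fact, while the paper's induction is self-contained. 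Two small points to tighten: the translation subgroup of $M$ is characteristic because it is the \emph{unique} subgroup of order $p$ (index $2$ by itself only yields normality); and the concluding ``direct check'' deserves to be written out, since your family $\{\lambda(B-u):u\notin B\}\cup\{(\lambda(B-u))':u\in B\}$ is exactly the set of affine images of $B$ and of $B'$ avoiding $0$, and matching it against the displayed definition of $\mathcal{X}_B$ requires the reparametrization $f_{\mu,u}^{-1}=f_{\mu^{-1},-\mu^{-1}u}$ (your version is in fact the one the paper's own proof delivers).
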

\begin{proof}
As observed in the paragraph below the Proposition \ref{4p2} each $S\in \mathcal{T}(D_{2p},H)$ is of the form
$T_C$ and is identified with the right loop $\mathbb{Z}_p^C$ for a unique subset $C$ of $\mathbb{Z}_p \setminus \{0\}$. Thus we need to prove that $\mathbb{Z}_p^C$ is isotopic to $\mathbb{Z}_p^B$ if and only if $C \in \mathcal{X}_B$.

Assume that $B=\emptyset$. Then $L\cong \mathbb{Z}_p$. Since there is exactly one loop transversal in $\mathcal{T}(D_{2p},H)$ (Corollary \ref{4p1c2}), we are done in this case. Now, assume that $B \neq \emptyset$. 

Let $u \in \mathbb{Z}_p \setminus \{0\}$. Let $\psi_u$ and $\rho_u$ be two maps on $\mathbb{Z}_p$ defined by $\psi_u(x)=x+u$ and $\rho_u(x)=u-x$ ($x \in \mathbb{Z}_p$). Note that ${R_u}^{\circ_{\tiny B}}=\psi_u$ or ${R_u}^{\circ_{\tiny B}}=\rho_u$ depending on whether $u \notin B$ or $u \in B$ respectively. First assume that $u \in B$. Then \begin{equation}
x \circ_{\tiny u} y = \left\{
\begin{array}{l l}
u-x+y & \qquad \mbox{if $y \notin B$}\\
x+y-u & \qquad \text{if $y \in B$}\\
\end{array} \right. \end{equation}

Let $\mu \in \mathbb{Z}_p\setminus\{0\}$. The binary operation $\circ_{\tiny u}$ on $L$ and the map $f_{\mu,u}$ defines a binary operation $\circ_{\tiny f_{\mu,u}}$ on $\mathbb{Z}_p$ so that $f_{\mu,u}$ is an isomorphism of right loop from $(\mathbb{Z}_p, \circ_{\tiny f_{\mu,u}})$ to $(L_{0,u},\circ_{\tiny u})$. We observe that
\begin{equation} x \circ_{\tiny f_{\mu,u}} y = f_{\mu,u}^{-1}(f_{\mu,u}(x)\circ_{\tiny u} f_{\mu,u}(y))= \left\{
\begin{array}{l l}
x+y & \qquad \mbox{if $y \notin C$}\\
y-x & \qquad \text{if $y \in C$},\\
\end{array} \right. \end{equation} where $C=(\mathbb{Z}_p\setminus \{0\})\setminus f_{\mu,u}^{-1}(B)=\mathbb{Z}_p\setminus f_{\mu,u}^{-1}(B)$. Thus, the right loop $\mathbb{Z}_p$ (with respect to $\circ_{\tiny f_{\mu,u}}$) is $\mathbb{Z}_p^{(f_{\mu,u}^{-1}(B))^{\prime}}$. 

Now, assume that $u \notin B$. 
Then \begin{equation}
x \circ_{\tiny u} y = \left\{
\begin{array}{l l}
x+y-u & \qquad \text{if $y \notin B$}\\
u-x+y & \qquad \mbox{if $y \in B$}\\
\end{array} \right. \end{equation}
Then above arguments imply that the map $f_{\mu,u}$ is an isomorphism of right loops from $\mathbb{Z}_p^{f_{\mu,u}^{-1}(B)}$ to $L_{0,u}$. Thus $\mathbb{Z}_p^C$ is isotopic to $\mathbb{Z}_p^B$ if $C \in \mathcal{X}_B$.

Conversely, let $C$ be a subset of $\mathbb{Z}_p\setminus \{0\}$ such that
$\mathbb{Z}_p^C$ is isotopic to $\mathbb{Z}_p^B$.
Let $(\alpha,\beta,\gamma):\mathbb{Z}_p^C\rightarrow \mathbb{Z}_p^B$ be an isotopy which factorizes as $(\alpha,\beta,\gamma)= (\alpha_1,\beta_1,I)(\gamma,\gamma,\gamma)$, where $(\alpha_1,\beta_1,I)$ is a principal isotopy from a principal isotope $L_1$ of $\mathbb{Z}_p^B$ to $\mathbb{Z}_p^B$ and an isomorphism $\gamma$ is an isomorphism from $\mathbb{Z}_p^C$ to $L_1$. By a description in the second paragraph of Section 3
and by Corollary \ref{4p1c1}, $L_1= (\mathbb{Z}_p^B)_{0,u}$ for some $u \in \mathbb{Z}_p$ and $\alpha_1=(R_u^{\circ_{\tiny B}})^{-1}$, $\beta_1= I$. We have observed that ${R_u}^{\circ_{\tiny B}}=\psi_u$ or ${R_u}^{\circ_{\tiny B}}=\rho_u$ according as $u \notin B$ or $u \in B$ respectively. Then the binary operation on $L_1$ is given by $(4.2)$. Since $\gamma$ is an isomorphism from $\mathbb{Z}_p^C$ to $L_1$, 
\begin{equation} R_y^{\circ_{\tiny C}}=\gamma^{-1} R_{\gamma(y)}^{\circ_{\tiny u}} \gamma
\end{equation}
Assume that $u \in B$.  
If $\gamma(y) \notin B$, then $R_{\gamma(y)}^{\circ_{\tiny u}}=\rho_{u+\gamma(y)}$ and if $\gamma(y) \in B$, then $R_{\gamma(y)}^{\circ_{\tiny u}}=\psi_{\gamma(y)-u}$. Since conjugate elements have same order, $\gamma^{-1}\rho_{u+\gamma(y)}\gamma=\rho_y$ or $\gamma^{-1}\psi_{\gamma(y)-u}\gamma=\psi_y$ according as $\gamma(y) \notin B$ or $\gamma(y) \in B$ respectively. Further, assume that $\gamma(y) \in B$. Then $\gamma(x+y)=\gamma(x)+\gamma(y)-u$ for all $x,y \in \mathbb{Z}_p$. Observe that $\gamma(0)=u$. By induction, we obtain that
\begin{equation} \gamma(x)=(\gamma(1)-\gamma(0))x+u. \end{equation}
Now, assume that $\gamma(y) \notin B$. Then $\gamma(y-x)=\gamma(y)-\gamma(x)+u$, equivalently, $\gamma(x+y)=\gamma(y+x)=\gamma(y)-\gamma(-x)+u$ for all $x,y \in \mathbb{Z}_p$. Observe that $\gamma(0)=u$ and $\gamma(1)+\gamma(-1)=2u$. By induction, we again obtain that
\begin{equation} \gamma(x)=(\gamma(1)-\gamma(0))x+u. \end{equation}
Now, assume that $u \notin B$. Then, by the similar arguments used above we obtain the same formula that in $(4.6)$ and $(4.7)$ for $\gamma$. 

Since $\gamma(1) \neq \gamma(0)$, we can write $\gamma(x)=\mu x+u$, where $\mu \in \mathbb{Z}_p\setminus\{0\}$ and $u \in \mathbb{Z}_p$. Thus, as argued in the first part of the proof \[C=\left\{
\begin{array}{l l}
f_{\mu,u}^{-1}(B) & \qquad \text{if $u \notin B$}\\
\mathbb{Z}_p\setminus f_{\mu,u}^{-1}(B) & \qquad \mbox{if $u \in B$}\\
\end{array} \right. \] 
\end{proof}

We need following definition for its use in the next theorem :

Let $\mathcal{G}$ denote a permutation group on a finite set $X$. Let $|X|=m$. For $\sigma \in \mathcal{G}$, let $b_k(\sigma)$ denote the number of $k$-cycles in the disjoint cycle decomposition of  $\sigma$. Let $\mathbb{Q}[x_1, \cdots, x_m]$ denote the polynomial ring in indeterminates $x_1, \cdots, x_m$. The \textit{cyclic index} $P_{\mathcal{G}}(x_1,\cdots,x_m) \in \mathbb{Q}[x_1, \cdots, x_m]$ of $\mathcal{G}$ is defined to be \[ P_{\mathcal{G}}(x_1,\cdots,x_m)=\frac{1}{|G|} \Sigma_{\sigma \in \mathcal{G}}~x_1^{b_1(\sigma)} \cdots x_m^{b_m(\sigma)}\] (see \cite[p. 146]{efb}).

Since $\mathbb{Z}_p$ is a vector space over the field $\mathbb{Z}_p$, we get an action of $Aff(1,p)$ on $\mathbb{Z}_p$ and so, it is a permutation group on the set $\mathbb{Z}_p$. Let us calculate the cyclic index $P_{Aff(1,p)}(x_1,\cdots,x_p)$ of $Aff(1,p)$. One can check that the formula we obtain is equal to that in \cite[Theorem 3, p. 144]{hf}.
\begin{lemma}\label{4l2} The cyclic index of the affine group $Aff(1,p)$ is \[P_{Aff(1,p)}(x_1,\cdots,x_p)=\frac{1}{p(p-1)}(x_1^p+p\sum_{d} \Phi(d)x_1 x_d^{\frac{p-1}{d}}+(p-1)x_p)\]
where the sum runs over the divisors $d \neq 1$ of $p-1$ and $\Phi$ is the Euler's phi function.  
\end{lemma}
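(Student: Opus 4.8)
The plan is to compute the cycle index of $Aff(1,p)$ directly by partitioning the group into three natural classes of elements: the identity, the non-identity translations $f_{1,t}$ with $t\ne 0$, and the genuinely affine maps $f_{\mu,t}$ with $\mu\ne 0,1$. For each class I would determine the cycle type of the corresponding permutation of $\mathbb{Z}_p$, multiply by the size of the class, sum, and divide by $|Aff(1,p)|=p(p-1)$.

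First I would handle the easy classes. The identity contributes the monomial $x_1^p$ (one class element, $p$ fixed points). A nontrivial translation $x\mapsto x+t$ with $t\ne 0$ has order $p$ and no fixed points, so as a permutation of the $p$-element set it is a single $p$-cycle; there are $p-1$ such elements, contributing $(p-1)x_p$. This already accounts for the first and last terms of the claimed formula.

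The main step is the affine maps $f_{\mu,t}$ with $\mu\ne 0,1$. Such a map has a unique fixed point, namely the solution of $\mu x+t=x$, i.e. $x=t/(1-\mu)$; conjugating by the translation carrying that fixed point to $0$ reduces $f_{\mu,t}$ to the linear map $x\mapsto \mu x$, which has the same cycle type. The map $x\mapsto\mu x$ fixes $0$ and permutes $\mathbb{Z}_p^\times$ by multiplication; since $\mathbb{Z}_p^\times$ is cyclic of order $p-1$, this permutation is a product of $(p-1)/d$ cycles each of length $d$, where $d=\mathrm{ord}(\mu)$ is the multiplicative order of $\mu$. Hence $f_{\mu,t}$ contributes the monomial $x_1\,x_d^{(p-1)/d}$. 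For each divisor $d\ne 1$ of $p-1$ there are exactly $\Phi(d)$ elements $\mu\in\mathbb{Z}_p^\times$ of order $d$, and for each such $\mu$ there are $p$ choices of $t$ (all giving an element with $\mu\ne 1$), so the total contribution of this class is $p\sum_{d\mid p-1,\,d\ne 1}\Phi(d)\,x_1\,x_d^{(p-1)/d}$. Assembling the three contributions and dividing by $p(p-1)$ gives the stated formula, and a final remark comparing with \cite[Theorem 3, p. 144]{hf} closes the lemma. I do not expect any real obstacle here; the only point requiring a little care is the bookkeeping that every pair $(\mu,t)$ with $\mu\ne 0$ is counted exactly once and that the $\mu=1$ case is separated off correctly, which is why the translations are treated as their own class.
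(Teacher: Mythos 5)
Your proposal is correct and follows essentially the same route as the paper: both partition $Aff(1,p)$ into the identity, the nontrivial translations, and the maps $f_{\mu,t}$ with $\mu\neq 0,1$, reduce the last class to $f_{\mu,0}$ via the unique fixed point, and count $\Phi(d)$ elements of each multiplicative order $d$ in the cyclic group $\mathbb{Z}_p^{\times}$. The only cosmetic difference is that you prove the cycle-type reduction by explicit conjugation where the paper cites a lemma of Fripertinger for it.
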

\begin{proof} We recall that for $\mu \in \mathbb{Z}_p\setminus\{0\}$ and $t \in \mathbb{Z}_p$, $f_{\mu,t} \in Aff(1,p)$ defined by $f_{\mu,t}(x)=\mu x+t$. We divide the members of $Aff(1,p)$ into following three disjiont sets 
\begin{itemize}
	 \item[(a)] $C_0=\{I=\rm{the~ identity~ map~on~\mathbb{Z}_p}\}$
	 \item[(b)] $C_1=\{f_{\mu,t}| \mu \in \mathbb{Z}_p\setminus \{0\}, \mu \neq 1, t \in \mathbb{Z}_p\}$
	 \item[(c)] $C_2=\{f_{1,t}| t \in \mathbb{Z}_p\setminus \{0\}\}$
\end{itemize}
There are $p(p-2)$ elements in the set $C_1$. By \cite[Lemma 2, p. 143]{hf}, we note that if $\mu \in \mathbb{Z}_p\setminus \{0\}, \mu \neq 1, t \in \mathbb{Z}_p$, then $f_{\mu,t}$ and $f_{\mu,0}$ has same cycle type. We note that $K=\{f_{\mu,0}|\mu \in \mathbb{Z}_p \setminus \{0\}\} \cong \mathbb{Z}_{p-1}$ and if $f_{\mu,0} \in K$ is of order $l$, then $f_{\mu,0}$ is a product of $\frac{p-1}{l}$ disjoint cycles of length $l$ and there are $\Phi(l)$ such permutations in $K$ of order $l$. Also, each element in the set $C_1$ fixes exactly one element. Order of each element in the set $C_2$ is $p$ and there are $p-1$ such elements. Thus, we obtain the cyclic index of $Aff(1,p)$ to be 
\[\frac{1}{p(p-1)}(x_1^p+p\sum_{d} \Phi(d)x_1 x_d^{\frac{p-1}{d}}+(p-1)x_p)\] 
\end{proof}

\begin{thm}\label{4t2} Let $D_{2p}$ denote the finite dihedral group ($p$ an odd prime) and $H$ be a subgroup of order $2$. Then $|\mathcal{I}tp(D_{2p},H)|=\frac{P_{Aff(1,p)}(2,\cdots,2)}{2}$.
\end{thm}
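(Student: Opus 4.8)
The plan is to translate the count of isotopy classes into an orbit-counting problem and then apply Burnside's lemma together with Lemma \ref{4l2}. By Theorem \ref{4t1}, two transversals $T_B$ and $T_C$ in $\mathcal{T}(D_{2p},H)$ are isotopic if and only if $C \in \mathcal{X}_B$, where $\mathcal{X}_B=\{f_{\mu,u}(B)\mid u \notin B\} \cup \{(f_{\mu,u}(B))^{\prime}\mid u \in B\}$ (and $\mathcal{X}_{\emptyset}=\{\emptyset\}$). So the first step is to recognize $B \sim C \iff C \in \mathcal{X}_B$ as the orbit equivalence of a group action on the power set $\mathcal{P}(\mathbb{Z}_p \setminus \{0\})$, or more naturally on $\mathcal{P}(\mathbb{Z}_p)$. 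I would let $Aff(1,p)$ act on $\mathbb{Z}_p$ in the usual way, inducing an action on $\mathcal{P}(\mathbb{Z}_p)$, and then enlarge this by the complementation involution $A \mapsto A^{\prime}=\mathbb{Z}_p \setminus A$; complementation commutes with every affine map, so together they generate a group $\widetilde{G} \cong Aff(1,p) \times \mathbb{Z}_2$ acting on $\mathcal{P}(\mathbb{Z}_p)$. The key verification is that the $\widetilde{G}$-orbit of a subset $B \subseteq \mathbb{Z}_p\setminus\{0\}$, intersected with the subsets of $\mathbb{Z}_p\setminus\{0\}$ that arise as transversals, matches $\mathcal{X}_B$ exactly — this is where the condition "$u \notin B$ versus $u \in B$" in the definition of $\mathcal{X}_B$ gets absorbed: applying $f_{\mu,u}$ followed by complementation, one sees $0 \in f_{\mu,u}(B) \iff u \in B$, so demanding that the image again omit $0$ (as every $T_C$ requires $C \subseteq \mathbb{Z}_p\setminus\{0\}$, i.e. $0 \notin C$) is precisely the dichotomy in $\mathcal{X}_B$. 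Thus $|\mathcal{I}tp(D_{2p},H)|$ equals the number of $\widetilde{G}$-orbits on $\mathcal{P}(\mathbb{Z}_p)$.

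Next I would apply Burnside's lemma to the action of $\widetilde{G}$ on $\mathcal{P}(\mathbb{Z}_p)$. The number of subsets of $\mathbb{Z}_p$ fixed by a permutation $\sigma$ of $\mathbb{Z}_p$ is $2^{c(\sigma)}$, where $c(\sigma)$ is the number of cycles of $\sigma$ (each cycle is either wholly in the subset or wholly out), and the weighted sum $\frac{1}{|\mathcal{G}|}\sum_\sigma 2^{c(\sigma)}$ is exactly $P_{\mathcal{G}}(2,2,\dots,2)$ evaluated at all indeterminates set to $2$. For $\mathcal{G}=Aff(1,p)$ this gives $P_{Aff(1,p)}(2,\dots,2)$ subsets up to the affine action alone. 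The complementation factor $\mathbb{Z}_2$ then halves this, provided no element of $\widetilde{G}$ of the form (affine map)$\circ$(complementation) fixes any subset — but a map of that form sends $B$ to $f_{\mu,t}(B)^{\prime}$, and if this equalled $B$ then $\mathbb{Z}_p$ would be partitioned into two sets of equal size $p/2$, impossible since $p$ is odd. Hence complementation acts freely on the set of $Aff(1,p)$-orbits, the two "halves" of $\widetilde{G}$ contribute equally, and the orbit count is $\frac{1}{2}P_{Aff(1,p)}(2,\dots,2)$, which is the claimed formula.

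I expect the main obstacle to be the bookkeeping in the first step: showing rigorously that the orbit equivalence on $\mathcal{P}(\mathbb{Z}_p)$ under $\widetilde{G}$ restricts correctly to the transversal-relevant subsets and genuinely coincides with the relation $C \in \mathcal{X}_B$ of Theorem \ref{4t1}, including the degenerate case $B = \emptyset$ (whose $\widetilde{G}$-orbit is $\{\emptyset, \mathbb{Z}_p\}$, while only $\emptyset$ corresponds to a transversal, consistent with $\mathcal{X}_\emptyset = \{\emptyset\}$ and with Corollary \ref{4p1c2}). One must check that $\mathcal{X}_B$ is symmetric and transitive as a relation — which it must be, being an isotopy class — and that this is exactly captured by "lies in the same $\widetilde{G}$-orbit and omits $0$". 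Once that identification is in place, the Burnside computation and the parity argument ruling out complementation-fixed subsets are routine, and Lemma \ref{4l2} supplies the explicit polynomial if an explicit numerical formula is desired.
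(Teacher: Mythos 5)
Your proposal is correct and follows essentially the same route as the paper: reduce via Theorem \ref{4t1} to counting the sets $\mathcal{X}_B$, count the $Aff(1,p)$-orbits on the power set of $\mathbb{Z}_p$ as $P_{Aff(1,p)}(2,\dots,2)$ using the cycle index of Lemma \ref{4l2} and Burnside/P\'{o}lya, and then halve because complementation pairs orbits without fixed points when $p$ is odd. The only (cosmetic) difference is that the paper justifies the division by $2$ by noting that $\mathcal{X}_B$ contains sets of both sizes $|B|$ and $p-|B|$, so one may restrict to $|B|\le\frac{p-1}{2}$, whereas you package the same fact as the free action of the complementation involution on the orbit set.
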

\begin{proof} By the Theorem \ref{4t1}, we see that the set $\mathcal{X}_B$ $(B \subseteq \mathbb{Z}_p \setminus \{0\})$ determines the isotopy classes in $\mathcal{T}(D_{2p},H)$. This means that $|\mathcal{I}tp(D_{2p},H)|=\left|\left\{\mathcal{X}_B|B \subseteq \mathbb{Z}_p \setminus \{0\}\right\}\right|$. The action of $Aff(1,p)$ on $\mathbb{Z}_p$ induces an action $^{\prime}\ast^{\prime}$ of $Aff(1,p)$ on the power set of $\mathbb{Z}_p$. This action preserves the size of each subset of $\mathbb{Z}_p$. We note that two subsets $A$ and $B$ of same size are in the same orbit of the action $\ast$ if and only if $B=\mu A+j$ for some $\mu \in \mathbb{Z}_p\setminus\{0\}$ and $j \in Z_p$. We observe that  for a non-empty subset $B$ of $\mathbb{Z}_p \setminus \{0\}$,
$\mathcal{X}_B$ contains the sets of size $|B|$ as well as of size $p-|B|$. This means that it is sufficient to describe $\mathcal{X}_B$ by the set $B$ such that $|B| \leq \frac{p-1}{2}$. Therefore, by \cite[Theorem 5.1, p. 157; Example 5.18, p.160]{efb} and Lemma \ref{4l2}, we see that $\left|\mathcal{I}tp(D_{2p}, H)\right|=\left|\left\{\mathcal{X}_B|B \subseteq D_{2p}\right\}\right|=\frac{P_{Aff(1,p)}(2,\cdots,2)}{2}$.
\end{proof}

\begin{example}\label{4e1} We list $|\mathcal{I}tp(D_{2p}, H)|$ for $p=3,5,7$, where $H$ is subgroup of $D_{2p}$ of order $2$.
\begin{enumerate} 
\item $|\mathcal{I}tp(D_6, H)|=2$. We have already calculated this in Example \ref{3e1}.

\item $|\mathcal{I}tp(D_{10}, H)|=3$.

\item $|\mathcal{I}tp(D_{14}, H)|=5$.
\end{enumerate}
\end{example}

\end{document}